\documentclass[12pt,a4paper]{article}

\usepackage[dvips]{graphicx}
\usepackage{amssymb,latexsym}
\usepackage{amsmath}
\usepackage{mathrsfs}
\usepackage{psfrag}
\usepackage[all]{xy}
\usepackage{color}
\usepackage{verbatim}

\setlength{\textwidth}{154mm} \setlength{\textheight}{21.1cm}
\setlength{\oddsidemargin}{5mm}

\newtheorem{prop}{Proposition}[section]
\newtheorem{theo}[prop]{Theorem}

\newtheorem{defn}[prop]{Definition}
\newtheorem{rem}[prop]{Remark}

\newenvironment{proof}
{\begin{trivlist} \item[\hskip \labelsep {\bf Proof}\hspace*{3 mm}]}
	{\hfill$\Box$\end{trivlist}}
\newenvironment{acknow}
{\begin{trivlist} \item[\hskip \labelsep {\bf Acknowledgments.}]}
	{\end{trivlist}}

\def \BR {\mathbb R}

\def \BL {\mathbb L}

\def \BP {\mathbb P}
\def \BS {\mathbb S}

\def \a {\alpha}
\def \b {\beta}
\def \c {\gamma}

\begin{document}

\title{Binary differential equations associated to congruences of lines in Euclidean 3-space}
\author{J. W. Bruce and F. Tari}

\maketitle
\begin{abstract}
We study quotients of quadratic forms and associated polar lines in the projective plane. Our results, applied pointwise to quadratic differential forms, shed some light on classical  binary differential equations (BDEs) associated to congruences of lines in Euclidean 3-space and allows us to introduce a new one. 
The new BDE yields a new singular surface in the Euclidean 3-space associated to a congruence of lines.
We determine the generic local configurations of the above BDEs on congruences. 
\end{abstract}

\renewcommand{\thefootnote}{\fnsymbol{footnote}}
\footnote[0]{2010 Mathematics Subject classification:
	53A05,  
	34A09,  
}
\footnote[0]{Key Words and Phrases. Congruences of lines, binary differential equations, quadratic forms, polarity in the projective plane, singularities.}

\section{Introduction} \label{sec:Int}

Let $\BL$ denote the space of lines in real Euclidean 3-space; the geometry of submanifolds of $\BL$ is well studied, with contributions from such luminaries as Hamilton, Dupin, Study and Blaschke. The set $\BL$ can be modelled by the tangent bundle to the unit sphere $T\BS^2\subset \BS^2\times \BR^3$, with a point $(n,x)$ corresponding to the line $x+tn$, although this does depend on an arbitrary choice of origin. In this paper we consider generic surfaces in $\BL$, known classically as congruences, and some associated BDEs. We classify the singularities of these BDEs, relating them to other aspects of the geometry. 

We shall suppose that the congruence $Z\subset \BL$ is a smooth regular surface. Classically, congruences were specified by a smooth immersion 
$$(n,x):U\to \BS^2\times \BR^3$$ 
with a point $(n(u,v),x(u,v))$ determining the line $x(u,v)+tn(u,v)$; $x(U)$ is called the director surface or {\it directrix}. The directrix is clearly not unique: if $f:U\to \BR$ is any smooth function the map $(n,x+fn)$ represents the same family of lines. In what follows we will only be interested in local properties of congruences so we are considering germs of mappings $(n,x):\BR^2,0\to \BS^2\times \BR^3$, though sometimes we specify a domain, that is an open set $U\subset \BR^2$. 

There are a range of ways to study the geometry of congruences, but because we need to carry out explicit calculations we prefer the Gaussian approach, employed for example in \cite{Weatherburn}, using quadratic forms, that is families of quadratic  differential forms $Q:T_zZ\to \BR$ which depends smoothly on $z\in Z$. Clearly a curve on $Z$ through a point $z$ yields the germ of a ruled surface in $\BR^3$. Some properties of these ruled surfaces depend only on the direction of the tangent to the curve and classically these were used, as below, to pick up distinct directions in $T_zZ$ and to associate to $Z$ some generally singular surfaces in $\BR^3$. For example consider a curve $\gamma=(n,x) : I\to Z\subset \BL$ with $0\in I$ an open interval and $\gamma(0)=(n(0),x(0) )= L$. We have an associated ruled surface parametrised by $(t,s)\mapsto x(t)+sn(t) \subset \BR^3$. 

On the line $L$ there is a {\it central point}, in antiquated terminology the foot of the common perpendicular of {\it consecutive} lines. These central points sweep out the {\it striction curve} of the ruled surface. We shall denote by $r=-(x'\cdot n'/n'\cdot n')(0)$ the signed distance from $x(0)$ to the central point in the $n(0)$ direction.
Clearly the central point only depends on 
the tangent to the curve at $z$.

There is also a classical invariant $\lambda=[x',n,n']/n'\cdot n'$, called the {\it distribution parameter} or {\it pitch} of the ruled surface, where $[-,-,-]$ denote the usual scalar triple product in $\mathbb R^3$. Clearly this only depends on the point $\gamma(0)$ and  $\gamma'(0)$. When this is identically zero we have a developable surface, generally the set of tangent lines to a smooth space curve $C$, and the line of striction here is the edge of regression, that is $C$ which does not depend on the choice of directrix.
We have the following concepts (see for example, \cite{Weatherburn}).

\begin{enumerate}
\item The directions in $T_zZ$  along which the values of $r$ are extreme are called {\it principal directions}; as we shall see generally there are two such directions. The corresponding central points on $L\in \BR^3$ are called {\it boundary points}. 

\item A point $z$ is called an {\it umbilic point} if every direction in $T_zZ$ is a principal direction. The integral curves on $Z$ of the principal directions are the {\it principal curves} and the surfaces swept out by the boundary points in $\BR^3$ are called the {\it boundary surfaces}. 

\item The surface in $\BR^3$ swept out by the midpoints of the boundary points is called the {\it middle surface}.

\item There are $0, 1$ or $2$ directions in $T_zZ$, called {\it torsal directions}, for which the pitch $\lambda$ of any associated ruled surface vanishes (infinitesimally we have a developable surface in $\mathbb R^3$). The associated central points on the ray $L\subset \mathbb R^3$ are called {\bf{\it foci}}. The point $z\in Z$ is called a {\it parabolic point} when the two torsal directions coincide. The surface in $\BR^3$ swept out by the foci is called a {\it focal surface}. At any point the midpoint of the two foci (which is always real even if the two focal points are imaginary, see \cite{BruceTAffineCong}) is also the midpoint of the boundary points. 

\item For each $z\in Z$ there are at most two directions in $T_zZ$, called {\it mean directions}, along which the values of the pitch is extreme. Their associated points on $L$ are precisely the midpoints of the focii (or boundary) points.
\end{enumerate}

The principal, torsal and mean directions are solutions of BDEs. These are equations of the form
\begin{equation}\label{eq:bde}
	{\bf a}(u,v)dv^2+2{\bf b}(u,v)dvdu+{\bf c}(u,v)du^2=0, 
\end{equation}
where ${\bf a},{\bf b},{\bf c}$ are smooth functions on some open set $U\subset \BR^2$. Clearly any quadratic differential form $\omega$ on $Z$ yields a BDE, given by $\omega=0$.

We establish the generic local configurations of the solutions of the above BDEs, and a new BDE associated to $Z$ called the characteristic BDE. The new BDE yields a new singular surface in the Euclidean 3-space associated to $Z$ (\S \ref{ssec:charc}).
In \S \ref{sec:BDEsCongruences} we see how the above BDEs are related using the approach in \cite{dupin}, where the elementary geometry of binary quadratic forms casts light on the geometry of BDEs (see Figure \ref{fig:pencilsQis}). The relations between the above BDEs 
can be deduced from a more general result on quotient of quadratic forms given in \S \ref{sec:quotients}.

\section{Preliminaries} \label{sec:prel}

We denote  the BDE  \eqref{eq:bde} by $\omega=({\bf a},2{\bf b},{\bf c})$ and refer to ${\bf a}, {\bf b}, {\bf c}$ as the coefficients of the BDE. Such an equation determines a pair of distinct directions at each point in the region where $\delta={\bf b}^2-{\bf ac}>0$ and no direction at points where $\delta<0$.  The set $\delta=0$ is {\it the discriminant} of the BDE.  If the coefficients of the BDE do not all vanish at a point $(u,v)$ on the discriminant, then it determines a unique (double) direction there; if they do, then all directions at $(u,v)$ are considered solutions.  

The solution curves of a BDE form a pair of transverse foliations in the region \mbox{$\delta>0$}. These foliations together with the discriminant curve constitute the so-called  {\it configuration} of the BDE. Two BDEs are said to be topologically equivalent if there is a local homeomorphism in the plane which maps the configuration of one equation to that of the other. 

Let $q$ be a regular point on the discriminant curve. If the unique solution of $\omega$ at $q$ is transverse to the discriminant curve,  then the local configuration of the BDE is a family of cusps (see \cite{davbook} for references and Figure \ref{fig:Dav}). When the direction is tangent to the discriminant curve, there are three generic (i.e., stable) local topological models: folded saddle, folded node and folded focus (\cite{davbook}, Figure \ref{fig:Dav}). If we write $p=dv/du$ and the 2-jet of the BDE in the form 
$$
a_0p^2+(b_0+b_1u+b_2v)p+c_1u+c_2v+c_3u^2+c_4uv+c_5v^2,
$$
then according to Lemma 2.1 in \cite{duality}, the origin is a fold singularity if $a_0\ne 0$ and $b_0=c_1=0$ (when $b_0=0$ and $a_0c_1\ne 0$ we get a family of cusps). At a folded singularity, setting $\lambda=(4a_0c_3- b_1^2-b_1c_2)/(4c_2^2)$, the singularity is of type folded saddle if $\lambda<0$, folded node if $0<\lambda<1/16$ and folded focus if $\lambda>1/16$.

\begin{figure}
\begin{center}
\includegraphics[width=13cm,height=2.8cm]{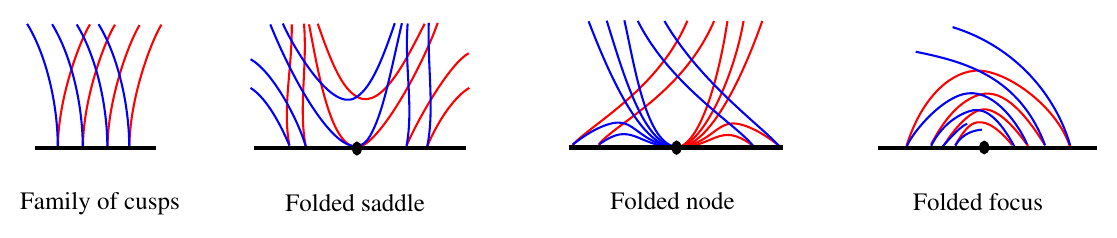}
\caption{Stable configurations of a BDE at points on its discriminant.}
\label{fig:Dav}
\end{center}
\end{figure}

At a point where all the coefficients of the BDE vanish, the discriminant is singular.  Our interest here is when the singularity is of type $A_1^+$ (i.e.,  $\delta\circ h^{-1}(u,v)=\pm (u^2+v^2)$ for some germ of a diffeomorphism $h$). 
Then the BDE has three generic possible configurations as shown in Figure \ref{fig:Darboux}, classified in (\cite{bruce-fidal,bdes, guinez}), called the star, monstar or lemon (here, by generic we mean within the set of BDEs with coefficients vanishing at the origin and whose discriminants have an $A_1^+$-singularity). If we write the 1-jet of the coefficients of the BDE at the origin in the form $j^1\omega=(a_1u+a_2v,2b_1u+2b_2v,c_1u+c_2v)$, then according to \cite{bdes}, at an $A_1^+$-singularity of the discriminant  we get a star or monstar (resp. lemon) if the cubic $\phi(p)=a_2p^3+(2b_2+a_1)p^2+(2b_1+c_2)p+c_1$ has three distinct roots  (resp. one root). The cases star and monstar are distinguished by the signs of $\alpha(p_i)\phi'(p_i)$ at the roots of $\phi$, where $\alpha(p)=a_2p^2+(b_2+a_1)p+b_1$. If all of them are positive we get a star, otherwise we get a monstar. The star, monstar and lemon singularities of BDEs are not stable within the set of all BDEs (\cite{Codim1bdes}). However, those that appear here associated to congruences are stable in that context.

\begin{figure}
	\begin{center}
		\includegraphics[width=12cm,height=3cm]{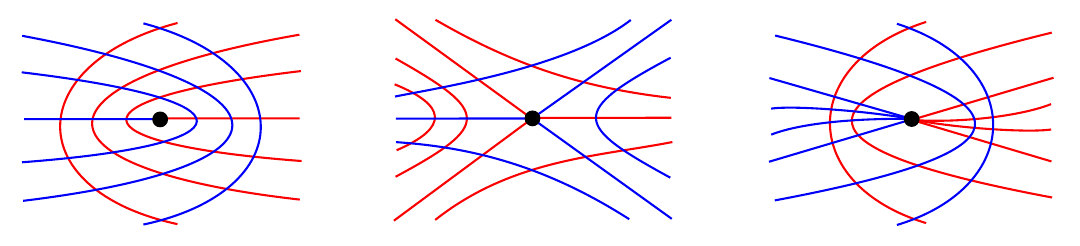}
		\caption{Generic configurations of BDEs of Morse Type $A_1^+$: lemon (left), star (centre), monstar (right).}
		\label{fig:Darboux}
	\end{center}
\end{figure}

We parametrise locally a congruence by $(n,x):U\to \BS^2\times \mathbb R^3$, where $U$ is an open set in $\mathbb R^2$ and identify germs of congruences with germs of mappings $C^{\infty}(U,\BR^5)$. 
We endow the set $C^{\infty}(U,\BR^5)$ with the Whitney $C^{\infty}$-topology and say that a property of congruences is {\it generic} if it is satisfied in an open and dense subset of $C^{\infty}(U,\BR^5)$.
To prove that a property is generic we consider the map  $\Phi: U\mapsto J^k(2,5)$ given $\Phi(u,v)=j^k_{(u,v)}(n,x)$. A given property is usually represented by a real algebraic variety $V$ in $J^k(2,5)$ with a smooth stratification. Thom's transversality theorem asserts that for generic congruences, $\Phi$ is transverse to the stratification of $V$. This means, in particular, that a property defined by more than three independent conditions, that is the codimension of $V>3$, is not generic.

\section{Quotients of quadratic forms} \label{sec:quotients}

Some key properties of the special directions on congruences of lines and their associated BDEs given in the introduction can be derived from a more general result below on quotients of quadratic forms.

\begin{theo}\label{theo:extrems}
\noindent {\rm(1)} Let $q_i=a_i\a ^2+2b_i\a\b+c_i\b^2, i=1,2,$ be two quadratic forms and consider the quotient 
$$
f(\a,\b)=\frac{q_2(\a,\b)}{q_1(\a,\b)},
$$
defined off the set $q_1(\a,\b)=0$. There are two local {\rm(}global if $q_1$ is positive definite{\rm)} extremal sets of points of the function $f$ which lie along the rays $(t\a,t\b)$ where $(\a,\b)$ are solutions of the quadratic equation
$$
\left|
\begin{array}{ccc}
\b^2 & -\a\b & \a^2\\
a_1 & b_1 &c_1\\
a_2 & b_2 & c_2
\end{array}\right|=0.
$$
 This expression is classically known as the Jacobian of $q_1,q_2$, written $Jac(q_1,q_2)$ that is the determinant of the $2\times 2$ matrix of partial derivatives of $q_1,q_2$. These directions coincide if and only if $q_1$ and $q_2$ have a common root. 

\noindent {\rm (2)}  Representing $q_i$ by symmetric matrices $E_i=\left(\begin{array}{cc}a_i&b_i\\b_i&c_i\end{array}\right)$ 
and writing $w=(\a,\b)^T$, the directions in $(1)$ are the solutions of $E_2 w=\mu E_1 w$. The corresponding values of $\mu$, denoted $\mu_1, \mu_2$, are real if $E_1$ is positive or negative definite. 

\noindent {\rm (3)} The  directions $(\a_1,\b_1), (\a_2,\b_2)$ corresponding to $\mu_1$, $\mu_2$ are orthogonal with respect to $q_2$. With respect to the same form they bisect the directions given by $q_1=0$ when these are real; this only occurs when $q_2$ is positive definite. 
\end{theo}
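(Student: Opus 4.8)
The plan is to handle the three assertions in turn, using the generalized eigenvalue description from part~(2), and to reduce the bisection statement to a purely Euclidean computation by simultaneously diagonalizing the pencil. Throughout, $w_1,w_2$ denote eigenvectors for the distinct eigenvalues $\mu_1\ne\mu_2$, so that $E_2w_i=\mu_iE_1w_i$.

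First, for orthogonality with respect to $q_2$: pairing the relation for $w_1$ with $w_2$ and that for $w_2$ with $w_1$, and using that $E_1,E_2$ are symmetric, gives $w_1^TE_2w_2=\mu_1w_1^TE_1w_2$ and $w_1^TE_2w_2=\mu_2w_1^TE_1w_2$. Subtracting and using $\mu_1\ne\mu_2$ forces $w_1^TE_1w_2=0$, and hence $w_1^TE_2w_2=0$ as well. Thus the two directions are conjugate with respect to both forms, in particular with respect to $q_2$, as claimed. The excluded case $\mu_1=\mu_2$ is precisely the coincidence of directions, i.e. the common-root case of part~(1).

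For the bisection I would first reduce to the Euclidean situation. Assuming $q_2$ positive definite, choose $P$ with $P^TE_2P=I$; then in the coordinates $w=P\tilde w$ the form $q_2$ becomes the standard inner product, $q_1$ becomes $\tilde w^T\tilde E_1\tilde w$ with $\tilde E_1=P^TE_1P$ symmetric, and the pencil relation $E_2w=\mu E_1w$ turns into the ordinary symmetric eigenproblem $\tilde E_1\tilde w=\mu^{-1}\tilde w$. In the orthonormal eigenbasis of $\tilde E_1$ one has $\tilde q_1=\lambda_1x^2+\lambda_2y^2$, so the real null directions of $q_1$, present exactly when $\lambda_1\lambda_2<0$ (that is, $E_1$ indefinite), are reflections of one another across each coordinate axis. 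Hence the coordinate axes, which correspond under $P^{-1}$ to $w_1,w_2$, bisect them. Since $P$ is an isometry from $(\mathbb R^2,q_2)$ onto Euclidean $\mathbb R^2$, pulling back recovers the bisection of the directions $q_1=0$ by $w_1,w_2$ with respect to $q_2$. Equivalently, and matching part~(1), one can check directly that for Euclidean $q_2$ the Jacobian $Jac(q_1,q_2)=0$ reduces to $b_1(\alpha^2-\beta^2)=(a_1-c_1)\alpha\beta$, the classical equation of the bisectors of $q_1=0$, and then transport this identity by $P$.

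Finally I would settle the reality clause. The null directions of $q_1$ are real exactly when $E_1$ is indefinite, and then part~(2) no longer guarantees real eigenvalues; real bisectors $w_1,w_2$ require $\mu_1,\mu_2$ to be real, which for indefinite $E_1$ forces $E_2$ to be definite. Taking $E_2$ positive definite, the symmetric--definite pencil $E_1w=\mu^{-1}E_2w$ does have real eigenvalues and real eigenvectors, so the real bisection occurs precisely in this regime. I expect the main obstacle to be not any single computation but packaging the geometry correctly: making precise what \emph{bisection with respect to} $q_2$ means, and checking that the simultaneous diagonalization faithfully converts the generalized eigenproblem and the $q_2$-geometry into the ordinary symmetric eigenproblem and Euclidean geometry, so that the elementary axis-symmetry argument transfers back without losing the reality bookkeeping.
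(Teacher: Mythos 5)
Your orthogonality and bisection arguments are correct, and the first is genuinely different from (and cleaner than) the paper's: the paper proves all of part (3) by reducing $q_2$ to a normal form ($\pm(\alpha^2+\beta^2)$ or $\alpha\beta$) and computing the Jacobian explicitly, whereas your pencil computation $(\mu_1-\mu_2)\,w_1^TE_1w_2=0$ is coordinate-free and gives conjugacy with respect to \emph{both} forms at once, which is exactly what is used later in \S 4. Your simultaneous diagonalization for the bisection is essentially the paper's definite case. The first genuine gap is coverage: you prove only part (3). Part (1) --- that the extrema of $f$ satisfy $Jac(q_1,q_2)=0$, which the paper gets by differentiating $f(\alpha,1)$ in an affine chart, together with the coincidence-iff-common-root criterion --- and part (2) --- the equivalence of the Jacobian roots with the generalized eigenproblem $E_2w=\mu E_1w$, and the reality of $\mu_1,\mu_2$ for definite $E_1$ via the conjugation identity $(\mu-\bar\mu)(\bar w^TE_1w)=0$ --- are used as inputs but never established, even though your orthogonality argument rests on (2).

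More seriously, your reality clause contains a step that fails: the implication ``real $\mu_1,\mu_2$ with $E_1$ indefinite forces $E_2$ to be definite'' is false. Take $q_1=\alpha^2-4\beta^2$ and $q_2=\alpha^2-\beta^2$: both indefinite, yet $Jac(q_1,q_2)$ is a nonzero multiple of $\alpha\beta$, so the two directions $(1,0)$, $(0,1)$ are real and distinct with real $\mu_1=1$, $\mu_2=1/4$, and $q_1=0$ has the real roots $\alpha=\pm 2\beta$. So your final paragraph cannot deliver the clause ``this only occurs when $q_2$ is positive definite''. The paper's treatment of the indefinite case is the piece you are missing: normalize $q_2=\alpha\beta$, take $q_1=a\alpha^2+c\beta^2$, compute $Jac(q_1,q_2)=2(a\alpha^2-c\beta^2)$, and note that real Jacobian directions force $ac\ge 0$, in which case $q_1=0$ has no real roots --- so for indefinite $q_2$ the real bisection configuration never arises. (Be aware the clause is delicate even in the paper: the diagonal reduction of $q_1$ discards a cross term that is invisible to the Jacobian but not to the roots of $q_1$, and in the example above the Jacobian directions do interchange the null directions of $q_1$ under a $q_2$-reflection; the exclusivity really depends on reading ``bisect with respect to $q_2$'' as a statement about genuine angles, which needs definiteness. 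Your instinct that the bookkeeping is the hard part was right, but the implication you chose to carry it is wrong.)
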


\begin{proof} (1) Taking an affine chart the extremal points of $f(\a,1)$ are the solutions of 
$$
(a_1\a+b_1)(a_2\a^2+2b_2\a+c_2)-(a_2\a+b_2)(a_1\a^2+2b_1\a+c_1)=0,
$$
equivalently, 
$$
(a_1b_2-b_1a_2)\a^2+(a_1c_2-c_1a_2)\a+(b_1c_2-c_1b_2)=0,
$$
which can be written in the determinantal form as stated. It is easy to see that the solutions coincide if and only if $q_1, q_2$ have a common root.

\noindent (2) It is easy to check that if $E_2 w=\mu E_1 w$ then $(\a, \b)$ is a \lq root\rq  of the quadratic form $Jac(q_1,q_2)=0$ and conversely. The second part follows from the usual argument: if $E_2w=\mu E_1 w$, then taking complex conjugates $E_2{\bar w}={\bar \mu} E_1 {\bar w}$ so $wE_2{\bar w}={\bar \mu}w E_1 {\bar w}$ and ${\bar w}E_2 w=\mu {\bar w}E_1 w$. So $(\mu-{\bar \mu})({\bar w}E_1 w)=0$ and the second factor is zero if and only if $w=0$. 

\noindent (3) If $q_2$ has rank $2$ we can assume it is $\pm (\a^2+\b^2$) or $\a\b$. In the first case we know we can reduce $q_1=a\a^2+c\b^2$, and $Jac(q_1,q_2)=4(a-c)\a\b$ the results easily follow. In the second case we can clearly suppose that $q_1=a\a^2+c\b^2$. Now $Jac(q_1,q_2)=2(a\a^2-c\b^2)$ and if $ac\ge 0$ clearly the two directions given by $Jac(q_1,q_2)=0$ are orthogonal with respect to $q_2$. Note that $q_1=0$ then has no roots.  
\end{proof}

There is a classical (elementary) geometry of quadratic forms used in the study of BDEs in \cite{dupin}. A non-zero quadratic form $a\alpha ^2+2b \alpha\beta+c\beta ^2$ can be represented by the point $q=(a:2b:c)$ in the projective plane $\BR \BP^2$. In $\BR \BP^2$ the set of singular quadratic forms is the conic $\Gamma=\{q: b^2-ac=0\}$. The {\it polar line} $\widehat{q}$ of a point $q$ with respect to $\Gamma$ is the line that contains all points $p$ such that $q$ and $p$ are harmonic conjugate points with respect to the intersection points $R_1$ and $R_2$ of the conic $\Gamma$ and a variable line through $q$. If the polar line $\widehat{q}$ meets $\Gamma$, then the tangents to $\Gamma$ at the points of intersection meet at $q$. A point $(a_1:2b_1:c_1)$ is on the polar line of a point $p=(a:2b:c)$ if and only if $2bb_1-ac_1-a_1c=0$. Three points in the projective plane are said to form a {\it self-polar triangle} if the polar line of any vertex of the triangle is the line through the remaining two points. In our case the points represent quadratic forms, so
any vertex of the self-polar triangle is the Jacobian of the remaining two vertices.

\section{BDEs on congruences}\label{sec:BDEsCongruences}
A  quadratic differential form $\omega(du,dv)={\bf a}(u,v)dv^2+2{\bf b}(u,v)dvdu+{\bf c}(u,v)du^2$ 
on a congruence $Z$ determines a BDE $\omega=0$ on $Z$. 
So the associated BDEs of two quadratic differential forms $\omega_1,\omega_2$ yields a Jacobian BDE $Jac(\omega_1,\omega_2)=0$; the formula gives a quadratic form at every point $(u,v)$.
This yields, at each point, the directions in which the quotient $\omega_2/\omega_1$ has extrema. Note that Theorem \ref{theo:extrems} shows that $Jac(\omega_1,\omega_2)=0$ has a single solution at points where the resultant of $\omega_1, \omega_2$ vanishes. Moreover the directions determined by $Jac(\omega_1,\omega_2)=0$ are orthogonal with respect to $\omega_1$ or $\omega_2$ and bisect the directions $\omega_1=0$ (resp. $\omega_2=0)$ with respect to the possibly indefinite metric $\omega_2$ (resp. $\omega_1$).  So for example if we consider a smooth surface in $\mathbb R^3$ with first fundamental form $\omega_1$ and second fundamental form $\omega_2$ then the Jacobian BDE $Jac(\omega_1,\omega_2)=0$ determines the  principle directions, which from above are orthogonal, with the asymptotic directions at hyperbolic points bisecting those directions.

For a congruence $Z\subset \BL$ there are three key quadratic forms $Q_1, Q_3, Q$ on the tangent spaces $T_zZ$, with $Q$ only defined up to a multiple of $Q_1$, from which two other (well-defined) quadratic forms can be constructed (see \cite{Weatherburn}). 

\begin{defn}\label{def:QisAndCoeff}
\noindent{\rm (1)} Define $Q_1, Q, Q_3:T_zZ\to \BR$ by 
$Q_1(du,dv)=|| n_u du+ n_v dv||^2$,
$Q(du,dv)=(x_u du+x_v dv)\cdot(n_u du +n_v dv)$, $Q_3(du,dv)=[x_u du+x_v dv,n_u du+n_v dv,n]$, so that
$$
\begin{array}{ccl}
Q_1(du,dv)&=&n_v\cdot n_v dv^2+2n_u\cdot n_vdudv+n_u\cdot n_udu^2,\\
Q(du,dv)&=&n_v\cdot x_v dv^2+(n_u\cdot x_v+n_v\cdot x_u)dudv+n_u\cdot x_udu^2,\\
Q_3(du,dv)&=&[x_v, n_v,n] dv^2+([x_u, n_v,n]+[x_v, n_u,n])dudv+[x_u, n_u,n]du^2.
\end{array}
$$ 
We write 
$$
\begin{array}{c}
A=n_u\cdot n_u, B=n_u\cdot n_v, C=n_v\cdot n_v,\\
a=-n_u\cdot x_u, b_1=-n_u\cdot x_v, b_2=-n_v\cdot x_u, c=-n_v\cdot x_v,\\
b=-\frac12(b_1+b_2), {\bar b}=-\frac12(b_1-b_2).
\end{array}
$$
\noindent {\rm (2)} Define $Q_2, Q_4:T_zZ\to \BR$ by 
$Q_2= Jac(Q,Q_1)$ and $Q_4=Jac(Q_3,Q_1)$. 
\end{defn}

At each point $z\in Z$, we shall write $Q_i$ for $Q_i(du,dv)$.

\begin{prop}\label{prop:BDEscong}
\noindent {\rm (1)} $Q_1, Q_2, Q_3, Q_4$ are all well defined.

\noindent {\rm (2)} The quadratic form $Q_2$ 
is given by 
$$
\begin{array}{rcl}
	Q_2&=&\left|
	\begin{array}{ccc}
		dv^2 & -dudv & du^2\\
		A& B &C\\
		a& b&c 
	\end{array}
	\right|\\
&&\\
&=&(Bc-Cb)dv^2+(Ac-Ca)dudv+(Ab-Ba)du^2,
\end{array}
$$
and the quadratic form $Q_3$ is a non-zero multiple of 
$$
\begin{array}{rcl}
Q_3&=&(Bc-Cb-C{\bar b})dv^2+(Ac-Ca-2B{\bar b})dudv+(Ab-Ba-A{\bar b})du^2\\
&=&Q_2-{\bar b}Q_1.
\end{array}
$$

\noindent {\rm (3)} The quadratic form $Q_4=Jac(Q_3,Q_1)=Jac(Q_2,Q_1)$ is given by
$$
\begin{array}{rcl}
Q_4&=&\left|
\begin{array}{ccc}
	dv^2 & -dudv & du^2\\
	A& B &C\\
	Ab-Ba& \frac12(Ac-Ca)&Bc-Cb 
\end{array}
\right|
\end{array},
$$
alternatively,
$$
\begin{array}{rcl}
Q_4&=&(2B^2c-2BCb-ACc+C^2a)dv^2+2(ABc-2ACb+BCa)dudv+\\
&&(2B^2a-2ABb+A^2c-ACa)du^2.
\end{array}
$$
\end{prop}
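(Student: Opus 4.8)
The plan is to derive all four parts from two elementary facts: the directrix ambiguity $x\mapsto x+fn$ combined with $n\cdot n_u=n\cdot n_v=0$ (which follows from $\|n\|\equiv1$), and the linearity of the Jacobian operation of Theorem \ref{theo:extrems} in each of its two quadratic-form arguments, together with $\mathrm{Jac}(Q_1,Q_1)=0$. For part (1) I would first note that $Q_1$ involves only $n$, so it is manifestly directrix-independent. Replacing $x$ by $x+fn$ sends $x_u\mapsto x_u+f_un+fn_u$ and $x_v\mapsto x_v+f_vn+fn_v$; substituting into $Q=(x_udu+x_vdv)\cdot(n_udu+n_vdv)$, the terms $f_un\,du+f_vn\,dv$ die against $n_udu+n_vdv$ because $n\cdot n_u=n\cdot n_v=0$, and the surviving $f(n_udu+n_vdv)$ contributes exactly $fQ_1$. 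Hence $Q\mapsto Q+fQ_1$, so $Q$ is well defined modulo $Q_1$; since at each point $f$ is a scalar and $\mathrm{Jac}$ is linear in its first slot with $\mathrm{Jac}(Q_1,Q_1)=0$, we get $\mathrm{Jac}(Q+fQ_1,Q_1)=\mathrm{Jac}(Q,Q_1)$, so $Q_2$ is well defined. For $Q_3=[x_udu+x_vdv,\,n_udu+n_vdv,\,n]$ the same substitution produces two extra batches: the $f_un\,du+f_vn\,dv$ terms vanish because their first slot is parallel to the third slot $n$, and the $f(n_udu+n_vdv)$ term vanishes because its first slot then equals the second slot. Thus $Q_3$ is unchanged, and $Q_4=\mathrm{Jac}(Q_3,Q_1)$ inherits well-definedness from $Q_3$ and $Q_1$.

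For part (2), the formula for $Q_2$ is a direct evaluation: writing $Q$ and $Q_1$ in coefficient form and applying the determinantal expression for $\mathrm{Jac}$ from Theorem \ref{theo:extrems} yields the stated $3\times3$ determinant, whose cofactor expansion along the top row gives the explicit coefficients. The substantive step is the identification of $Q_3$. Here I would introduce the frame $\{n_u,n_v,n\}$ (with $n_u,n_v$ spanning $T_n\BS^2$ since $n\cdot n_u=n\cdot n_v=0$) and set $W=[n_u,n_v,n]$, so that $n_u\times n$ and $n_v\times n$ can be expanded in $n_u,n_v$ with coefficients drawn from $A,B,C$ and a single factor $1/W$; concretely $n_u\times n=\tfrac1W(Bn_u-An_v)$ and $n_v\times n=\tfrac1W(Cn_u-Bn_v)$. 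Rewriting each triple product $[x_\bullet,n_\bullet,n]=x_\bullet\cdot(n_\bullet\times n)$ through these expansions converts every coefficient of $Q_3$ into a combination of the dot products $x_\bullet\cdot n_\bullet$, i.e.\ of the scalars $a,b_1,b_2,c$, all carrying the common factor $W$ (this $W$ is the promised nonzero multiple). Splitting the outcome into its part symmetric in $(b_1,b_2)$ and its antisymmetric part, and using $b=-\tfrac12(b_1+b_2)$ and $\bar b=-\tfrac12(b_1-b_2)$, the symmetric part reproduces $Q_2$ while the antisymmetric part assembles into $\bar b\,Q_1$, giving $Q_3=Q_2-\bar b\,Q_1$ up to the factor $W$.

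For part (3) I would exploit part (2): since $Q_3$ is a nonzero scalar multiple of $Q_2-\bar b\,Q_1$, linearity of $\mathrm{Jac}$ in its first argument together with $\mathrm{Jac}(Q_1,Q_1)=0$ gives $\mathrm{Jac}(Q_3,Q_1)=\mathrm{Jac}(Q_2,Q_1)$ up to that scalar, so the two definitions of $Q_4$ agree. It then remains to insert the coefficient triple $(Ab-Ba,\ \tfrac12(Ac-Ca),\ Bc-Cb)$ of $Q_2$ as the bottom row of the Jacobian determinant against $Q_1=(A,B,C)$ and to expand along the top row; clearing the factor $\tfrac12$ reproduces the explicit quadratic coefficients claimed for $Q_4$. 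This last step is a mechanical cofactor expansion, so I would only record the three entries and confirm they match.

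The main obstacle is the bookkeeping in part (2): correctly fixing the sign and normalisation of $W=[n_u,n_v,n]$ and of the expansions of $n_u\times n,\,n_v\times n$, and then cleanly separating the symmetric contribution (which must match $Q_2$ exactly) from the antisymmetric contribution (which must assemble precisely into $\bar b\,Q_1$). Everything else is either a one-line structural argument, as in part (1) and the reduction opening part (3), or a routine determinant expansion.
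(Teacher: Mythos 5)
Your proposal is correct and follows essentially the same route as the paper: the directrix ambiguity $x\mapsto x+fn$ plus pointwise row operations for (1), reduction of the triple products to the scalars $A,B,C,a,b,\bar b$ for (2) (your expansion of $n_u\times n$ and $n_v\times n$ in the frame $\{n_u,n_v\}$ with $W=[n_u,n_v,n]$, $W^2=AC-B^2$, is the paper's substitution $n=n_u\wedge n_v/\|n_u\wedge n_v\|$ followed by the vector triple product identity, in different clothing), and $Jac(Q_2-\bar b\,Q_1,Q_1)=Jac(Q_2,Q_1)$ plus cofactor expansion (with the harmless factor $2$ you correctly flag) for (3). The one item the paper records that you omit is that all coefficients of $Q_3$ vanish on $\Sigma(n)$ --- exactly where your factor $1/W$ (and the paper's rescaling by $\|n_u\wedge n_v\|$) degenerates --- so a line covering that locus, where the claimed nonzero multiple does not literally exist, would complete the write-up and is also what the paper later invokes in Proposition \ref{prop:zerocoeffTorsal}.
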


\begin{proof}
\noindent (1) Recall that we may replace $x:U\to \BR^3$ by $x+fn$ for any function $f:U\to \BR$. This replaces $Q$ by $Q+fQ_1$, and clearly $Jac(q_2+cq_1,q_1)=Jac(q_2,q_1)$ for any quadratic forms $q_1, q_2$ and constant $c$. Similarly $Q_3$ is replaced by
$$
[\a (x_u+f_un+fn_u)+\b (x_v+f_vn+fn_v),\a n_u+\b n_v.n]
$$
which clearly yields the same value. 

(2) The expression of $Q_2$ follows from the fact that it is $Jac(Q,Q_1)$.
For $Q_3$, we first observe that all coefficients of $Q_3$ vanish at points where 
$n$ is singular. Indeed, for a generic congruence, the singular set of $n$ is a regular curve 
and along it $n_v=\alpha n_u$ (or $n_v=\alpha n_u$) for some scalar function $\alpha$. Differentiating 
$[x_u, n,n]\equiv 0$ and $[x_v, n,n]\equiv 0$  along the curve proves the claim.

Suppose now that $n$ is not singular. Then $n=n_u\wedge  n_v/||n_u\wedge  n_v||$. 
We are interested in the solutions of $Q_3=0$, so do not distinguish differential forms that are non-zero multiples of each other and multiply $Q_3$ by $||n_u\wedge  n_v||$.  Then 
$$
\begin{array}{rcl}
	[ x_u, n_u, n_u\wedge  n_v]&=& x_u.( n_u\wedge( n_u\wedge  n_v))=x_u.(( n_u. n_v) n_u-( n_u. n_u). n_v))\\
	&=&Ba-A x_u. n_v=Ba-Ab+A\overline{b}\\
\end{array}
$$

Similarly,
$$\begin{array}{rcl}
	[ x_u, n_v, n_u\wedge  n_v]&=& x_u.( n_v\wedge( n_u\wedge  n_v))=x_u.(( n_v. n_v) n_u-( n_v. n_u). n_v))\\
	&=&Ca-Bb+B\overline{b},\\
\end{array}
$$
$$\begin{array}{rcl}
	[ x_v, n_u, n_u\wedge  n_v]&=& x_v.( n_u\wedge( n_u\wedge  n_v))=
	x_v.(( n_u. n_v) n_u-( n_u. n_u). n_v))\\
	&=&Bb+B\overline{b}-Ac,\\
\end{array}
$$
and
$$\begin{array}{rcl}
	[ x_v, n_v, n_u\wedge  n_v]&=& x_v.( n_v\wedge( n_u\wedge  n_v))=x_v.(( n_v. n_v) n_u-( n_v. n_u). n_v))\\
	&=&Cb+C\overline{b}-Bc.\\
\end{array}
$$
 It follows that $Q_3$ is a non-zero multiple of
$$
(Bc-bC-C{\bar b})dv^2+(Ac-Ca-2B{\bar b})dudv+(Ab-Ba-A{\bar b})du^2,
$$
and we shall take this as our $Q_3$ from now on, so that
$Q_3=Q_2-\overline{b}Q_1.$ 

Part (3) is a straightforward calculation.
\end{proof}

\begin{rem}
{\rm
\noindent Interpreting, as previously, the forms (at each point of $Z$) as elements of the real projective plane $\BR\BP^2$, note that although $Q$ is not well defined the line joining $Q$ to $Q_1$ is, and is the polar line $\widehat{Q_2}$ of $Q_2=Jac(Q,Q_1)$. Then $Q_3=Q_2-\overline{b}Q_1$ is on the line joining $Q_1$ and $Q_2$ which is the polar line $\widehat{Q_4}$ of $Q_4=Jac(Q_3,Q_1)=Jac(Q_2,Q_1)$. 
It is not hard to see that this lies on the line joining $Q$ to $Q_1$, so we have a configuration like that in Figure \ref{fig:pencilsQis}. We also get a new quadratic form $Q_5=Jac(Q_3,Q_4)$ which we deal with in more details in \S \ref{ssec:charc}.
We prove in Theorem \ref{theo:mean} (resp. Theorem \ref{theo:charDir}) that $Q_1,Q_2,Q_4$ (resp. $Q_3,Q_4,Q_4$) are vertices of a self-polar triangle.
}
\end{rem}

\begin{figure}[h]
	\begin{center}
		\includegraphics[scale=0.4]{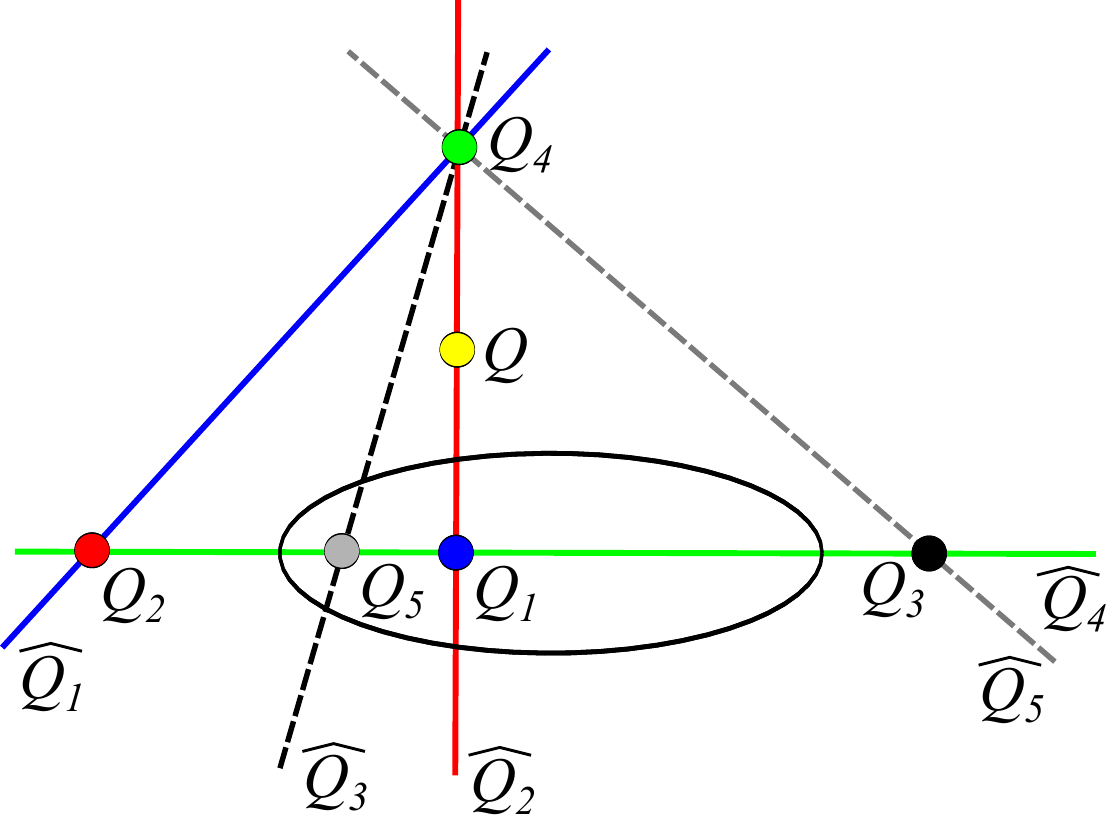}
		\caption{The configuration of the quadratic forms $Q_i$ and their polar lines.}
		\label{fig:pencilsQis}
	\end{center}
\end{figure}

As in the introduction a curve $\c(s)=(n,x)(u(s),v(s))$ through a point $z=\c(0)$ on $Z$ represents a ruled surface in $\mathbb R^3$. Along any generator of a ruled surface the central point  on $L$ is the directed distance
$$
r=-\frac{Q(\c(0))(u'(0),v'(0))}{Q_1(\c(0))(u'(0),v'(0))}
$$ 
from $x(0)$, which evidently only depends on the derivative $\c'(0)$ and $\c(0)$. As the tangent direction rotates in $T_zZ$ the central point moves up and down $L$. These ruled surfaces also have a {\it parameter of distribution} or in the terminology of Blaschke a {\it pitch} which is constant along a generator and again only depends on $\c'(0)$. The pitch of the ruled surface $(n,x)(u(s),v(s))$ at $s=0$ is 
$$
\lambda=\frac{Q_3(\c(0))(u'(0),v'(0))}{Q_1(\c(0))(u'(0),v'(0))}.
$$

We can now describe the geometry behind the quadratic forms above using Theorem \ref{theo:extrems}. 

\begin{prop}\label{prop:BDEsZ}
\noindent {\rm (1)} {\it The principal directions}: the directions in which 
the distance from the director surface to the central point has extreme value
are given by the BDE $Jac(Q,Q_1)=Q_2=0$.

\noindent {\rm (2)} {\it The torsal directions}: the directions in which the ruled surfaces are (infinitesimally) developable are given by $Q_3=0$. 

\noindent {\rm (3)} {\it The mean directions}: the pitch is $Q_3/Q_1=Q_2/Q_1-{\bar b}$ so the directions in which the pitch has its maximal values  are given by $Q_4=Jac(Q_2,Q_1)$.
\end{prop}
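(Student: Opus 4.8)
The plan is to read off all three parts directly from Theorem \ref{theo:extrems}(1), applied pointwise, together with the formulas for the central point $r=-Q/Q_1$ and the pitch $\lambda=Q_3/Q_1$ recorded just before the statement. Throughout I fix $z\in Z$ and regard $Q,Q_1,Q_3$ as honest binary quadratic forms in $(du,dv)$, so that $r$ and $\lambda$ are genuine quotients of quadratic forms in the direction variable, to which Theorem \ref{theo:extrems} applies verbatim. Since $Q_1(du,dv)=\|n_u\,du+n_v\,dv\|^2$ is positive definite wherever $n$ is an immersion (the generic situation, away from the singular curve of $n$ already discussed in Proposition \ref{prop:BDEscong}(2)), part (2) of that theorem guarantees that the two extremal directions are real, so the BDEs below genuinely cut out directions.

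For part (1), the principal directions are by definition those along which $r=-Q/Q_1$ is extremal. Extrema of $-Q/Q_1$ and of $Q/Q_1$ coincide, and by Theorem \ref{theo:extrems}(1) the latter lie along the zeros of $Jac(Q,Q_1)$, which is exactly $Q_2$ by Definition \ref{def:QisAndCoeff}(2). Hence the principal directions are the solutions of $Q_2=0$. Part (2) is then immediate from the definitions: a direction is torsal precisely when the associated ruled surface is infinitesimally developable, i.e.\ when its pitch $\lambda=Q_3/Q_1$ vanishes; off the locus $Q_1=0$ this is the condition $Q_3=0$.

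For part (3), the mean directions are those extremizing the pitch $\lambda=Q_3/Q_1$, and applying Theorem \ref{theo:extrems}(1) to this quotient shows they lie along the zeros of $Jac(Q_3,Q_1)$. The one point requiring care is to identify this with the claimed form $Jac(Q_2,Q_1)=Q_4$: using $Q_3=Q_2-\bar b\,Q_1$ from Proposition \ref{prop:BDEscong}(2) together with the identity $Jac(q_2+cq_1,q_1)=Jac(q_2,q_1)$ already invoked in the proof of Proposition \ref{prop:BDEscong}(1), one gets $Jac(Q_3,Q_1)=Jac(Q_2,Q_1)=Q_4$. This step is legitimate because $\bar b$ is the value at the fixed point $z$ and is therefore a constant with respect to the direction variable; equivalently, writing $\lambda=Q_2/Q_1-\bar b$ exhibits the pitch as the central-point quotient shifted by a direction-independent constant, so it is extremized in the same directions as $Q_2/Q_1$.

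There is no substantive obstacle here: all the analytic content has been absorbed into Theorem \ref{theo:extrems} and the linearity of the Jacobian in its numerator slot. The only things to keep straight are the pointwise viewpoint, the orientation-free nature of the extremal condition (so that the signs in $r=-Q/Q_1$ and the scalar normalisation of $Q_3$ are harmless), and the observation that $\bar b$ is constant in the direction at each $z$. I would therefore present the argument as three short paragraphs mirroring the three items, invoking Theorem \ref{theo:extrems}(1) in (1) and (3) and the pitch formula in (2).
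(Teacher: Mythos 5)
Your proof is correct and follows exactly the route the paper intends: the paper states this proposition without a separate proof, treating it as an immediate pointwise application of Theorem~\ref{theo:extrems}(1) to the quotients $r=-Q/Q_1$ and $\lambda=Q_3/Q_1$ given just beforehand, together with the identity $Jac(Q_3,Q_1)=Jac(Q_2,Q_1)=Q_4$ from Proposition~\ref{prop:BDEscong}. Your explicit observations --- that $\bar b$ is constant in the direction variable at each fixed $z$, and that the sign in $r=-Q/Q_1$ is harmless for locating extrema --- correctly fill in the small details the paper leaves implicit.
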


We determine the generic local topological configurations of the BDEs in Proposition 
\ref{prop:BDEsZ} and introduce a new BDE determining  the so-called {\it characteristic directions}.  We deal in \S\ref{ssec:LinesCurv} with the pairs determined by the geometry of the central points and in \S\ref{ssec:BDEparameter} with those determined by the pitch. These BDEs are discussed in \cite{Weatherburn}; our approach reveals the way they are related at each point via polarity in the projective plane. 
We also obtain the generic configurations of the solution curves 
of the BDEs at their singular points and on the singular set $\Sigma(n)$ of the map $n$.

As our study is local in nature, in the rest of the paper we take the line of interest $L$ to be the oriented $z$-axis and parametrise $Z$ locally at $z$ 
by $(u,v)\mapsto (n(u,v), x(u,v))$, with $(u,v)$ in a neighbourhood $U$ of the origin in $\BR^2$. 
We have $n(0,0)=(0,0,1)$ and take the directrix $x(u,v)=(x_1(u,v),x_2(u,v),0)$ with
\begin{equation}\label{3jetslambda12} 
	\begin{split}
		j^3x_1&=\alpha_{10}u+\alpha_{11}v+\alpha_{20}u^2+\alpha_{21}uv+\alpha_{22}v^2
		+\displaystyle{\sum_{i=0}^3\alpha_{3i}u^{3-i}v^i},\\
		j^3x_2&=\beta_{10}u+\beta_{11}v+\beta_{20}u^2+\beta_{21}uv+\beta_{22}v^2+
		\displaystyle{\sum_{i=0}^3\beta_{3i}u^{3-i}v^i}.
	\end{split}
\end{equation}

The map $n:U\to S^2$ is a local diffeomorphism on $U\setminus \Sigma(n)$ 
where, for generic congruences, $\Sigma(n)$ is empty or a regular curve. The map $n$ has locally a fold singularity 
at most points on $\Sigma(n)$  and a cusp singularity 
at isolated points on that curve. With the notation in Definition \ref{def:QisAndCoeff}, 
$(u,v)\in \Sigma(n)$ if and only if $(B^2-AC)(u,v)=0$, that is, $Q_1$ is degenerate.

To simplify notation, we shall omit mention of selecting $(u,v)\in U$, but implicitly we are referring to the quadratic forms and projective plane at each point $(n(u,v), x(u,v))\in Z$.

\subsection{BDEs associated to central points}\label{ssec:LinesCurv} 

As above the principle directions are given by the BDE $Q_2=0$; its integrals  curves are the
lines of principal curvature; point $z$ on $Z$ is called an umbilic point
if the all the coefficients of $Q_2$ vanish at $z$. 
We observe that umbilic points are stable on a generic congruence, that is, they persists when deforming the congruence.

Suppose that $Q_1$ is not degenerate, that is, $n$ is not singular. If 
 $\mu_1, \mu_2$ are the values described in Theorem \ref{theo:extrems}(2), the discriminant function $\delta(Q_2)$ of $Q_2$ is given by 
\begin{equation}\label{eq:discrimP}
\delta(Q_2)=4(B^2-AC)^2(H^2-K),
\end{equation}
where 
$$
\begin{array}{c}
H=\frac12(\mu_1+\mu_2),\, K=\mu_1\mu_2,
\end{array}
$$
so $\delta(Q_2)$ vanishes if and only if  $H^2-K=0$, that is precisely at umbilic points. These are points where all the coefficients of $Q_2$ vanish, that is, where $Q_1$ and $Q$ are linearly dependent. Note that because of the ambiguity in the choice of $Q$, $\mu_1$ and $ \mu_2$ are only defined up to the addition of the (same) constant, 
that is only $|\mu_1-\mu_2|$ is well defined, 
and so is $H^2-K=(\mu_1-\mu_2)^2/4$.

\begin{theo}\label{theo:princ}
{\rm (1)}  Away from $\Sigma(n)$ and umbilic points, the lines of principal curvature form a  $Q_1$-orthogonal net. The three generic configurations of BDEs of Morse Type $A_1^+$ {\rm (Figure \ref{fig:Darboux})} can occur at umbilic points on generic congruences and only these {\rm (see also \cite{CraizerGarcia})}.

{\rm (2)} For each $(u,v)\in U\setminus \Sigma(n)$ the quadratic form $Q_2\in \BR \BP^2$ is the unique point on the polar line $\widehat{Q}$ of $Q$ 
that has $Q_1$-orthogonal roots. It is the intersection of the polar lines $\widehat Q_1$  and $\widehat Q$ {\rm (}all forms evaluated at $(u,v)${\rm ).} 

{\rm (3)} On $\Sigma(n)$,  $Q_2$ factors as a product of two $1$-forms $\sigma_i$, $i=1,2$. At fold singularities of $n$ and at most points on $\Sigma(n)$, the two $1$-forms are regular and their foliations have $2$-point contact along $\Sigma(n)$. Their common tangent direction is along the kernel of $dn$, so they are transverse to $\Sigma(n)$ {\rm (Figure \ref{fig:PrincipalSingn} left)}. The pair $(\sigma_1,\sigma_2)$ is topologically equivalent to $(du, d(u-v^2))$.

At isolated fold points on $\Sigma(n)$, one of the $1$-forms is regular and the other has generically a saddle, node or focus singularity with eigenspaces when real transverse to $\Sigma(n)$  {\rm (Figure \ref{fig:PrincipalSingn} middle three figures)}.
The pair $(\sigma_1,\sigma_2)$ is topologically equivalent to 
$$
\begin{array}{ll}
(du,(v-u)du+vdv),& \mbox{if $\sigma_2$ is a saddle},\\
(du,(v+\frac18 u)du+vdv),& \mbox{if $\sigma_2$ is a node}, \\
(du,(u+v)du+vdv),& \mbox{if $\sigma_2$ is a focus}.
\end{array}
$$

At a cusp singularity of $n$,  the two $1$-forms are regular and their leaves  have $3$-point contact at the singular point of $n$ and are tangent to $\Sigma(n)$ 
{\rm (Figure \ref{fig:PrincipalSingn}, right)}. The pair $(\sigma_1,\sigma_2)$ is topologically equivalent to $(du, d(u+vu^2+v^3))$.
\end{theo}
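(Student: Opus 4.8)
The plan is to establish the projective statement (2) first, since it underlies everything. The elementary fact I will use is that, in the coordinates of Section~\ref{sec:quotients}, the locus of binary forms $p=(\mathbf a':2\mathbf b':\mathbf c')$ whose two roots are orthogonal with respect to a nondegenerate form is exactly that form's polar line: a one-line check from the conjugacy formula $2bb_1-ac_1-a_1c=0$ shows that the $Q_1$-orthogonality condition on the roots of $p$ is precisely the equation of $\widehat{Q_1}$, and likewise for $\widehat{Q}$. Now Theorem~\ref{theo:extrems}(3) says the roots of $Q_2=Jac(Q,Q_1)$ are $Q_1$-conjugate, and by the antisymmetry $Jac(Q,Q_1)=-Jac(Q_1,Q)$ the same theorem gives that they are $Q$-conjugate too; hence $Q_2\in\widehat{Q_1}\cap\widehat{Q}$. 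Off $\Sigma(n)$ and away from umbilics the forms $Q,Q_1$ are distinct, so the two polar lines are distinct and meet in a single point, which proves (2). The $Q_1$-orthogonality of the principal net is the first assertion of (1), again directly from Theorem~\ref{theo:extrems}(3) since $Q_1$ is positive definite off $\Sigma(n)$.

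For the umbilic part of (1), I note first that the simultaneous vanishing of the coefficients of $Q_2$ is the condition $Q\propto Q_1$, two independent equations, so umbilics are generically isolated (and stable). Since $Q_1$ is positive definite off $\Sigma(n)$, Theorem~\ref{theo:extrems}(2) gives real $\mu_1,\mu_2$, and \eqref{eq:discrimP} then shows $\delta(Q_2)=4(B^2-AC)^2(H^2-K)\ge 0$ with equality exactly at umbilics; generically $\delta(Q_2)$ has an $A_1^+$ Morse minimum there. I would then compute $j^1Q_2$ at the umbilic, form the cubic $\phi$ and the quadratic $\alpha$ of the criterion recalled in Section~\ref{sec:prel}, and read off lemon, star or monstar from the number of real roots of $\phi$ and the signs of $\alpha(p_i)\phi'(p_i)$. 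A Thom transversality argument, using the free parameters available in the $3$-jet \eqref{3jetslambda12}, shows that each of the three types is realised and that no more degenerate configuration (a higher $A_k$ discriminant, or a repeated root of $\phi$) is forced, these being cut out by more than the permitted number of conditions; this yields the clause ``and only these''.

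For part (3) the decisive observation is that on $\Sigma(n)$ the kernel direction $w_0$ of $dn$ satisfies $Q(w_0)=(x_udu+x_vdv)\cdot(n_udu+n_vdv)=0$ automatically, while $Q_1=\ell^2$ with $\ell$ the kernel $1$-form. Thus $Q$ and $Q_1$ share the root $w_0$, so by Theorem~\ref{theo:extrems}(1) the two roots of $Q_2=Jac(Q,Q_1)$ coincide there, the common double direction being $w_0=\ker dn$, which is transverse to $\Sigma(n)$. Hence $\Sigma(n)$ lies in the discriminant of $Q_2$; moreover reality of the principal directions gives $\delta(Q_2)\ge 0$ near $\Sigma(n)$, so $\delta(Q_2)$ vanishes to even order along the regular curve $\Sigma(n)$ and $Q_2$ factors as $\sigma_1\sigma_2$ with $\sigma_i$ smooth direction fields, both tangent to $w_0$ on $\Sigma(n)$. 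Computing $j^2Q_2$ along a fold point and checking generic non-vanishing of the coefficient governing the quadratic separation of the two leaves then gives $2$-point contact and the model $(du,d(u-v^2))$.

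The remaining classification at isolated fold points and at cusps is the heart of the matter. At the isolated fold points the factor $\sigma_2$ acquires a zero; I would compute its linear part, whose eigenvalues distinguish saddle, node and focus exactly as in the listed models $(du,(v-u)du+vdv)$, $(du,(v+\tfrac18u)du+vdv)$, $(du,(u+v)du+vdv)$, and check that the eigendirections, when real, are generically transverse to $\Sigma(n)$. At a cusp of $n$ the common direction $w_0$ becomes tangent to $\Sigma(n)$ and the contact of the two leaves jumps to order $3$, giving $(du,d(u+vu^2+v^3))$. I expect the cusp case to be the \emph{main obstacle}: there the reduction to the explicit normal form requires controlling the $3$-jet of $Q_2$ through the cusp, proving finite (hence topological) determinacy of the pair of foliations at a tangential triple contact, and running a transversality argument on \eqref{3jetslambda12} both to confirm that this is the generic behaviour at cusps and to verify that the models listed exhaust the possibilities.
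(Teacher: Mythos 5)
Your route for parts (1) and (2) is essentially the paper's own: the umbilic analysis via $j^1Q_2$, the cubic $\phi$ and quadratic $\alpha$ criterion of \cite{bruce-fidal,bdes}, and a transversality argument on the free coefficients of the $3$-jet \eqref{3jetslambda12} is exactly how the paper proceeds, and your one-line check that the $Q_1$-orthogonality locus is the polar line $\widehat{Q_1}$ (via $2bb_1-ac_1-a_1c=0$) correctly packages what the paper takes from Theorem \ref{theo:extrems} and the \S\ref{sec:quotients} setup. For part (3) your common-root observation is a genuinely nicer entry point than the paper's: since $Q_1(w_0)=\|dn(w_0)\|^2=0$ and $Q(w_0)=(dx(w_0))\cdot(dn(w_0))=0$ for $w_0=\ker dn$ on $\Sigma(n)$, one has $Q_1=\ell^2$, $Q=\ell m$ there, whence $Jac(Q,Q_1)$ is a multiple of $\ell^2$; this explains conceptually why the double root of $Q_2$ is along $\ker dn$, a fact the paper only extracts from the explicit computation with $n_v=vw$.

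There are, however, two genuine gaps. First, the inference ``$\delta(Q_2)\ge 0$ vanishes to even order along the regular curve $\Sigma(n)$, hence $Q_2$ factors as $\sigma_1\sigma_2$ with smooth $\sigma_i$'' is false as stated: $\delta=v^2(u^2+v^2)$ is nonnegative and vanishes to order two along $v=0$, yet has no smooth square root at the origin. What is actually needed, and what the paper proves, is the generic positivity of the coefficient of $v^2$ in $\delta(Q_2)$, namely $\Lambda=(n_u\cdot n_u)\overline{Q_1}(w\cdot x_v,-n_u\cdot x_v)>0$ (together with the case split on $c_P(0,0)=-(n_u\cdot n_u)(n_u\cdot x_v)$ vanishing or not, which is precisely what separates generic fold points from the isolated ones); your planned $j^2Q_2$ computation would have to supply exactly this and should be spelled out. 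Second, and more seriously, the final step — topological equivalence of the \emph{pair} $(\sigma_1,\sigma_2)$ to the listed models — is left as a plan. Eigenvalue analysis of the linear part of $\sigma_2$ distinguishes saddle, node and focus for that single foliation, but it does not yield a simultaneous topological normal form for the pair (note the specific coefficient $\tfrac18$ in the node model, which reflects constraints in the classification, not a choice you can make by scaling), and at a cusp of $n$ the tangential triple-contact model $(du,d(u+vu^2+v^3))$ requires a determinacy/classification theorem that you explicitly defer. The paper closes all three cases by invoking Theorem 2.2 of Davydov \cite{davbook}, which classifies exactly these generic pairs of foliations; re-proving finite (hence topological) determinacy from scratch, as you propose for the cusp case, is a substantial undertaking rather than a verification, so as written your proof is incomplete precisely at the point you flag as the main obstacle.
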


\begin{figure}[htp]
	\begin{center}
		\includegraphics[scale=0.8]{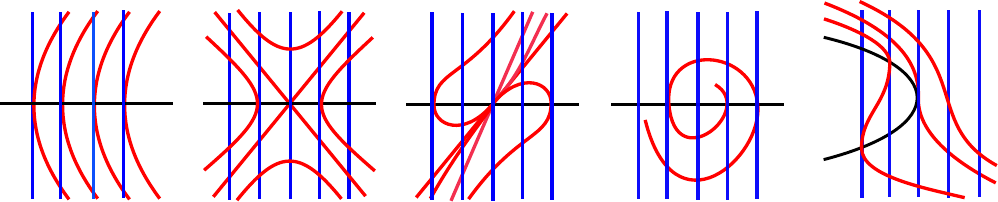}
		\caption{Principal curvature on $\Sigma(n)$: left at generic fold singular points of $n$, middle three at special fold singularities of $n$, left at a cusp singularity of $n$. The curve in black is $\Sigma(n)$.}
		\label{fig:PrincipalSingn}
	\end{center}
\end{figure}

\begin{proof}
(1) For generic congruences, the umbilic points are not on $\Sigma(n)$ (three independent conditions need to be satisfied for that to happen), 
so at umbilic points, we can presume that $n$ is non-singular, that is $Q_1$ is positive definite, 
and we can take
\begin{equation}\label{expression_n}
	n(u,v)=(u,v,\sqrt{1-u^2-v^2}).
\end{equation}

Taking $j^3x_i,i=1,2,$ as in (\ref{3jetslambda12}) we 
find that the 1-jets of the coefficients of $Q_2$ are 
$$
\begin{array}{rcl}
j^1(Bc-Cb)&=&\frac12\left(\alpha_{11} +\beta_{10}+ (\alpha_{21} + 2\beta_{20})u +
(2\alpha_{22} + \beta_{21})v\right),\\
j^1(Ac-Ca)&=& \alpha_{10}-\beta_{11}  + (2\alpha_{20} - \beta_{21})u + (\alpha_{21} - 2\beta_{22})v,\\
j^1(Ab-Ba)&=&\frac12\left(\alpha_{11} + \beta_{10} + (\alpha_{21} + 2\beta_{20})u) + (2\alpha_{22} + \beta_{21})v\right).\\
\end{array}
$$

Then the origin is an umbilic point if and only if  $\beta_{10}=-\alpha_{11}$ and $\beta_{11}=\alpha_{10}$. In that case, 
$
j^1Q_2=(a_1u+a_2v,b_1u+b_2v,-a_1u-a_2v),  
$
with
$$
\begin{array}{ll}
\begin{array}{rcl}
a_1&= &\frac12\alpha_{21}+\beta_{20},\\
b_1&=&2\alpha_{20}-\beta_{21},\\
\end{array}
&
\begin{array}{rcl}
a_2&=& \alpha_{22}+\frac{1}{2}\beta_{21},\\
b_2&=&\alpha_{21}-2\beta_{22}.
\end{array}
\end{array}
$$ 

Clearly, we can choose the 2-jets of $x_1$ and $x_2$ so  that $a_i,b_i, i=1,2$, can take any values in $\mathbb R$. It follows by the results in  \cite{bruce-fidal, bdes}  that the lines of principal curvature can have any of the three generic topological configurations lemon, star or monstar in Figure \ref{fig:Darboux}, and these are the only configurations that generically occur at umbilic points. This result on the configurations at umbilics was obtained previously by Craizer and Garcia in \cite{CraizerGarcia}.

(2) The statement follows from Theorem \ref{theo:extrems}.

(3) Suppose the origin is a fold singularity of $n$. We can take $\Sigma(n)=\{v=0\}$
and $n_v=0$ on $\Sigma(n)$. Then $n_{vv}\ne 0$, so $n_v(u,v)=vw(u,v)$  for some smooth $w$ with $w(0,0)\ne 0$ 
and $n_u,w$ linearly independent in a neighbourhood of the origin. We have
$$
\begin{array}{c}
A=n_u\cdot n_u\ne 0, B=vn_u\cdot w, C=v^2w_v\cdot w_v,\\
a=-n_u\cdot x_u, \, b=-\frac12(n_u\cdot x_v+vw\cdot x_v),\,  c=-vw\cdot x_v,
\end{array}
$$
so the coefficients of $\omega(Q_2)$ are
$$
\begin{array}{rcl}
a_P=Bc-Cb&=&-v^2\left( (n_u\cdot w)(w\cdot x_v)-\frac12(n_u\cdot x_v+vw\cdot x_u)(w\cdot w)\right),\\
b_P=Ac-Ca&=&-v\left((n_u\cdot n_u)(w\cdot x_v)-v(w\cdot w)(n_u\cdot x_u))\right),\\
c_P=Ab-Ba&=&-\frac12(n_u\cdot n_u)(n_u\cdot x_v+vw\cdot x_u)+v(n_u\cdot w)(n_u\cdot x_u).\\
\end{array}
$$
It follows that
{\footnotesize
$$
\delta(Q_2)=v^2(n_u\cdot n_u)\left[
(n_u\cdot n_u)(w\cdot x_v)^2-2(n_u\cdot w)(n_u\cdot x_u)(w\cdot x_v)+(w\cdot w)(n_u\cdot x_v)^2+vO(u,v)
\right]
$$
}
for some smooth function $O(u,v)$. As $n_u$ and $w$ are linearly independent,
it follows that 
$\overline{Q_1}(du,dv)=w\cdot w dv^2+2n_u\cdot wdudv+n_u\cdot n_udu^2$ defines (locally) a metric on $Z$. The coefficient of $v^2$ in $\delta(Q_2)$ is then 
$\Lambda=(n_u\cdot n_u)\overline{Q_1}(w\cdot x_v,-n_u\cdot x_v)$. It vanishes if and only if 
$w\cdot x_v=n_u\cdot x_v=0$, which does not occur on generic congruences at points on $\Sigma(n)$. Therefore, $\Lambda>0$.
We now have two cases, $c_P(0,0)=(n_u\cdot n_u)(n_u\cdot x_v)$ vanishing or not.

If $n_u\cdot x_v(0,0)\ne 0$, $c_P(0,0)\ne 0$ so $Q_2$  is a non-zero multiple of the product of the following two regular 1-forms
$$
\sigma_i=2c_Pdu+(-b_P-(-1)^i\sqrt{\delta(Q_2)}\,)dv,\, i=1,2.
$$  
On $v=0$, the above one forms reduce to $du=0$ which is the kernel direction of $dn$. Their leaves have $2$-point contact at points on $v=0$. The result on their topological configurations follows  from Theorem 2.2 in \cite{davbook}.

If $n_u\cdot x_v(0,0)=0$, then all the coefficients of $Q_2=0$  vanish at the origin.
Away from the curve $c_P(u,v)=0$, $Q_2$ is still a product of the above 1-forms
$\omega_i$. 
Writing $a_P=v^2\bar{a}_P$, $b_P=v\bar{b}_P$, $c_P=\bar{c}_P$, 
and assuming $\bar{b}_P(0,0)\ne 0$ for generic congruences (we have already two independent condition at the origin), and without loss of generality that $\bar{b}_P(0,0)>0$,
we have 
$$
\sigma_i=
2\bar{c}_Pdu+
v(-\bar{b}_P-(-1)^i\bar{b}_P(1-\frac{4\bar{a}_P\bar{c}_P}{\bar{b}_P^2})^{\frac12}\,)dv,\, i=1,2.
$$

The 1-form  
$\sigma_1=\bar{c}_P(2du+h(u,v)dv)$ for some regular function $h$. So it defines a regular foliation away from $\bar{c}_P=0$ which extends to a regular foliation on $\bar{c}_P=0$ (we get a line of singularities on $\bar{c}_P=0$ which does not alter the configuration of the foliation determined by $\sigma_1$). 
For the analysis of the singularity of the 1-form $\sigma_2$ we can take the first two component of $n$ in the form  $(u,(1+k_{10}u+k_{11}v+O(2))v^2+l_2u^2+O(u^3)))$. Then $n_u\cdot x_v(0,0)=\alpha_{11}=0$ and 
$j^1\sigma_2=\left((2l_2\beta_{11} + \alpha_{21})u) + 2(\alpha_{22} + \beta_{10})v\right)du+2\beta_{11}vdv.$ Clearly, $\sigma_1$ can have generically a saddle, node or focus singularity. The result on the topological configurations of the pair $(\sigma_1,\sigma_2)$ follows from  Theorem 2.2 in \cite{davbook}.

At a cusp singularity of $n$, we can still take $n_v=0$ on $\Sigma(n)=\{\lambda(u,v)=0\}$ for some smooth function 
$\lambda$ with $\lambda_u(0,0)\lambda_{vv}(0,0)\ne 0$. We can then proceed as above and write $n_v(u,v)=\lambda(u,v)w(u,v)$ with $w(0,0)\ne 0$ and $x_u$ and $w$ linearly independent. The coefficients $a_P,b_P,c_P$ are as above but 
replacing the factor $v$ by $\lambda(u,v)$. 
For generic congruences we cannot have any extra conditions, so $c_P(0,0)\ne 0$ and $Q_2$ factors as a product of two $1$-forms $\sigma_i$, $i=1,2$. We can show that the leaves at the origin have $3$-point contact between them and $2$-point contact with $\Sigma(n)$.
The result on the topological configurations of the pair $(\sigma_1,\sigma_2)$ also follows from Theorem 2.2 in \cite{davbook}.
\end{proof} 


\subsection{BDEs associated to the parameter distribution}\label{ssec:BDEparameter}

We consider as in \S\ref{ssec:LinesCurv} a curve $\gamma$ on the surface $Z$ with $\gamma(0)=(n(0), x(0))= z$ and seek the directions $\gamma'(0)\in T_zZ$ for which the parameter distribution $\lambda$ of the associated ruled surface in $\BR^3$ vanishes. These are the  {\it torsal directions}  
and their integral curves are  the {\it torsal curves}. 
We have the following observation.

\begin{prop}\label{prop:zerocoeffTorsal}
For a generic congruence, the coefficients of the BDE $Q_3=0$ of the torsal curves vanish simultaneously at $z$ if and only if $n$ is singular at $z$.
\end{prop}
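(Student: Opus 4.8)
The plan is to prove the two implications separately, observing that the genericity hypothesis is needed only for the harder (converse) direction. The implication ``$n$ singular at $z$ $\Rightarrow$ all coefficients of $Q_3$ vanish'' is essentially the computation already carried out in the proof of Proposition~\ref{prop:BDEscong}: the coefficients of the polynomial form $Q_3$ are exactly the scalar triple products $[x_i,n_j,n_u\wedge n_v]$, each equal to $x_i\cdot\bigl(n_j\times(n_u\times n_v)\bigr)$ after expanding the vector triple product. Since $n$ is singular at $z$ precisely when $n_u,n_v$ are linearly dependent, i.e. $n_u\wedge n_v=0$, all these triple products vanish there and hence every coefficient of $Q_3$ is zero. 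I would simply recall this, noting that it needs no genericity assumption (it covers both the rank-one points of $\Sigma(n)$ and the non-generic rank-zero points).

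For the converse I would argue by contradiction using the identity $Q_3=Q_2-\overline{b}\,Q_1$ from Proposition~\ref{prop:BDEscong}(2). Assume all coefficients of $Q_3$ vanish at $z$ while $n$ is non-singular there; then $B^2-AC\neq0$, and since $Q_1(du,dv)=\|n_u\,du+n_v\,dv\|^2$ is a sum of squares, $Q_1$ is positive definite, so its projective point satisfies $[Q_1]\notin\Gamma$. The vanishing of $Q_3$ reads $Q_2=\overline{b}\,Q_1$. If $\overline{b}\neq0$ then $Q_2=\overline{b}\,Q_1\neq0$, so $[Q_2]=[Q_1]$ in $\BR\BP^2$; but $Q_2=Jac(Q,Q_1)$ is, by the remark following Proposition~\ref{prop:BDEscong}, the pole of the line through $[Q]$ and $[Q_1]$, so $[Q_1]=[Q_2]$ would make $\widehat{Q_1}$ pass through $Q_1$, i.e. $[Q_1]\in\Gamma$, contradicting definiteness. (The degenerate sub-case $[Q]=[Q_1]$ is harmless: it gives $Jac(Q,Q_1)=0=Q_2=\overline b\,Q_1$, forcing $\overline b=0$. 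Equivalently, by Theorem~\ref{theo:extrems}(2) the roots of $Jac(Q,Q_1)$ are real when $Q_1$ is definite, so $Q_2$ cannot be proportional to the definite form $Q_1$.) Hence $\overline{b}=0$, whereupon $Q_2=0$, i.e. $Jac(Q,Q_1)=0$, which means $Q$ and $Q_1$ are proportional, so $z$ is an umbilic point.

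It then remains to show this residual possibility is non-generic. The umbilic condition $Q\propto Q_1$ has codimension two (as noted in the excerpt, umbilic points together with $\Sigma(n)$ form three independent conditions), and $\overline{b}=0$ is an independent condition of codimension one; in the normalisation $n=(u,v,\sqrt{1-u^2-v^2})$ one checks the three constraints become $\alpha_{11}=0,\ \beta_{10}=0,\ \beta_{11}=\alpha_{10}$ on the $1$-jet data, manifestly independent. Thus the locus where $Q_3=0$, $n$ is non-singular, $z$ is umbilic and $\overline{b}=0$ has codimension three in the relevant jet space. Since the jet-extension $\Phi$ of a generic congruence has a two-dimensional source, Thom transversality forces $\Phi(U)$ to miss this codimension-three stratum, so on a generic congruence $Q_3=0$ cannot occur at a non-singular point of $n$. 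Combined with the first paragraph this yields the stated equivalence.

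The main obstacle I expect lies entirely in the converse, and within it in two places: first, cleanly ruling out $\overline{b}\neq0$ via the polarity argument while disposing of the degenerate configurations ($Q_1=0$ or $[Q]=[Q_1]$, each of which either already makes $n$ singular or forces $\overline b=0$); and second, the codimension bookkeeping that certifies the leftover ``umbilic with $\overline{b}=0$'' case as three genuinely independent conditions, so that transversality removes it for generic congruences.
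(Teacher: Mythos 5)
Your proof is correct, and it terminates at exactly the same pair of facts as the paper's own argument — in the non-singular case the vanishing of the coefficients forces $\overline{b}=0$ together with umbilicity, and that locus is killed by a three-independent-conditions transversality count (your normalised conditions $\alpha_{11}=0$, $\beta_{10}=0$, $\beta_{11}=\alpha_{10}$ are precisely the paper's implicit ones) — but the mechanics differ in both directions. For the forward implication the paper cites the computation in Proposition~\ref{prop:BDEscong}(2), whose justification there (differentiating $[x_u,n,n]\equiv 0$ along $\Sigma(n)$, assuming $\Sigma(n)$ is a regular curve with $n_v=\alpha n_u$ along it) is tied to generic rank-one singularities; you instead note that identities such as $[x_u,n_u,n_u\wedge n_v]=Ba-Ab+A\overline{b}$ are polynomial identities via the vector triple product, so every coefficient of $Q_3=Q_2-\overline{b}Q_1$ vanishes wherever $n_u\wedge n_v=0$ — needing no genericity and covering rank-zero points too, which is cleaner and marginally more general than the paper. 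For the converse the paper eliminates among the three coefficient equations to reach the certificate $(AC-B^2)(Ac-aC)=0$ and then asserts that the second factor yields $\overline{b}=0$ and umbilicity; you bypass the elimination, reading $Q_2=\overline{b}\,Q_1$ directly off the hypothesis and excluding $\overline{b}\neq 0$ conceptually, either by polarity (the coincidence $[Q_2]=[Q_1]$ would place the definite form $Q_1$ on its own polar line, hence on $\Gamma$) or, more simply, because Theorem~\ref{theo:extrems}(2) forces $Jac(Q,Q_1)$ to have real roots when $Q_1$ is definite, so it cannot equal a non-zero multiple of the definite form $Q_1$; you also dispose explicitly of the degenerate configurations ($[Q]=[Q_1]$, or $Q=0$) that the paper's ``it is easy to see'' glosses over. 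What the paper's elimination buys is an explicit polynomial identity in the coefficients, convenient for the later coordinate computations; what your route buys is coordinate-free transparency, insensitivity to the sign conventions of Definition~\ref{def:QisAndCoeff}, and a forward direction that genuinely holds at every singular point of $n$. Both proofs are complete at the same level of rigour on the transversality step.
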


\begin{proof}
We showed in the proof of Proposition \ref{prop:BDEscong} (2) that the coefficients of $Q_3$ all vanish on $\Sigma(n)$. 
For the converse, setting to zero the coefficients of $Q_3$ leads to $(AC-B^2)(Ac-aC)=0$. The first factor vanishes if and only if $n$ is singular. 
 If the second factor vanishes then it is easy to see that $\bar{b}=0$ and $z$ an umbilic point,  
so we have three independent conditions which cannot be satisfied simultaneously  for a  generic congruence.
\end{proof}

\begin{theo}\label{theo:lambda=0}
{\rm(1)} 
At each point $z$ on $Z\setminus \Sigma(n)$ there are $2,1$ or $0$ torsal directions in $T_zZ$ for which the pitch $\lambda$ vanishes. These are given by the BDE $Q_3=Q_2-{\bar b} Q_1=0$.
	
We have  $\delta(Q_3)=\delta(Q_2)+\overline{b}^2\delta(Q_1)$. The discriminant of the BDE $Q_3=0$ is generically a smooth curve on  $Z$, called the parabolic curve, and all the configurations in {\rm Figure \ref{fig:Dav}} can occur, and generically only these.
The set on $Z$ where $\delta(Q_3)>0$ {\rm (}resp.  $\delta(Q_3)< 0${\rm )}  is called the {\rm hyperbolic (}resp. {\rm elliptic) region} of $Z$.

{\rm(2)}  The torsal curves extend to $\Sigma(n)$. For generic congruences, the curve $\Sigma(n)$ lies in the closure of the hyperbolic region of $Z$.
The parabolic curve intersect $\Sigma(n)$ tangentially at isolated points. At such points, the torsal curves form  locally a family of cusps as in {\rm Figure \ref{fig:Torsal_Par}}. 
\end{theo}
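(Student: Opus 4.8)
The plan is to treat the two parts separately, leaning throughout on the criteria recalled in Section~\ref{sec:prel} and on Theorem~\ref{theo:extrems}. For part (1) the count of $2,1,0$ torsal directions is immediate: $Q_3$ is a genuine quadratic form off $\Sigma(n)$, its coefficients not all vanishing there by Proposition~\ref{prop:zerocoeffTorsal}, so the number of real roots of $Q_3=0$ is governed by the sign of $\delta(Q_3)$. For the discriminant identity I would represent $Q_1,Q_2$ by their symmetric matrices $E_1,E_2$, use $\delta(q)=-\det E$, and expand $\delta(Q_3)=-\det(E_2-\bar b E_1)=\delta(Q_2)+\bar b^2\delta(Q_1)+\bar b\,\Theta$, where $\Theta$ is the polarisation of the determinant, i.e. the apolarity pairing of $Q_1,Q_2$. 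The key point is that $\Theta$ vanishes identically: this is exactly the condition that $Q_1$ lie on the polar line $\widehat{Q_2}$, which holds because $Q_2=Jac(Q,Q_1)$ (the Remark after Proposition~\ref{prop:BDEscong}), and it may equally be checked by the one-line substitution of the coefficients $(Bc-Cb,\,Ac-Ca,\,Ab-Ba)$ and $(C,2B,A)$. This yields $\delta(Q_3)=\delta(Q_2)+\bar b^2\delta(Q_1)$; since $\delta(Q_2)\ge 0$ by \eqref{eq:discrimP} while $\delta(Q_1)<0$ off $\Sigma(n)$, both signs are genuinely available, justifying the hyperbolic/elliptic terminology.

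Next I would establish regularity of the parabolic curve and the local models. On $\{\delta(Q_3)=0\}$ away from $\Sigma(n)$, Proposition~\ref{prop:zerocoeffTorsal} guarantees a unique double direction, so the only possible pictures are those of Figure~\ref{fig:Dav}. To pin them down I would place a discriminant point at the origin, write the $2$-jet of $Q_3$ in the normal form $a_0p^2+(b_0+b_1u+b_2v)p+c_1u+c_2v+c_3u^2+c_4uv+c_5v^2$, and apply the criteria of Section~\ref{sec:prel}: $0$ is a regular value of $\delta(Q_3)$, so the parabolic curve is smooth, precisely under the fold condition $a_0\ne0$, $b_0=c_1=0$, the invariant $(4a_0c_3-b_1^2-b_1c_2)/(4c_2^2)$ then separating folded saddle, node and focus, with a family of cusps when $b_0=0$, $a_0c_1\ne0$. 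A Thom transversality argument, computing these jet-coefficients from the $3$-jets \eqref{3jetslambda12} of $x$ and the jet of $n$, shows that all of these occur on an open set of congruences and that deeper degeneracies impose more than three independent conditions, hence do not occur generically.

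For part (2) I would work near a fold point of $n$, taking $\Sigma(n)=\{v=0\}$ and $n_v=v\,w$ with $w(0,0)\ne0$ and $n_u,w$ independent, exactly as in the proof of Theorem~\ref{theo:princ}(3). Substituting into the coefficients $(Bc-Cb-C\bar b,\ Ac-Ca-2B\bar b,\ Ab-Ba-A\bar b)$ of $Q_3$ and collecting powers of $v$ shows each coefficient is divisible by $v$, so $Q_3=v\,\widetilde Q_3$; since scaling does not change zero directions, the torsal BDE extends across $\Sigma(n)$ as $\widetilde Q_3=0$. On $v=0$ the $dv^2$-coefficient of $\widetilde Q_3$ still carries a factor $v$ and vanishes, leaving $\widetilde Q_3|_{v=0}=du\,(\tilde b\,dv+\tilde c\,du)$, the product of the kernel direction $du=0$ of $dn$ and a second, generically transverse, direction; hence $\delta(\widetilde Q_3)>0$ there and $\Sigma(n)$ lies in the closure of the hyperbolic region. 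The two factors coincide exactly where $\tilde b=0$ on $\Sigma(n)$, a single equation on the curve, so these points are isolated; there $\delta(\widetilde Q_3)=0$, identifying them as the intersections of the parabolic curve with $\Sigma(n)$, and computing $j^1\delta(\widetilde Q_3)$ shows this intersection is tangential. Finally, at such a point the unique torsal direction is $du=0$, transverse to the tangent discriminant, so by the criterion of Section~\ref{sec:prel} the torsal curves form a family of cusps as in Figure~\ref{fig:Torsal_Par}.

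The main obstacle I expect is the local analysis of part (2): because all coefficients of $Q_3$ vanish on $\Sigma(n)$, one must rescale by the correct power of $v$ before any configuration is visible, and the delicate steps are verifying the tangency of the parabolic curve to $\Sigma(n)$ at the isolated points $\tilde b=0$ from the $1$-jet of $\delta(\widetilde Q_3)$, and the transversality bookkeeping needed, both here and in part (1), to certify that the listed models are the only ones occurring generically.
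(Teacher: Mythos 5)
Your proposal is correct and, in its overall architecture, coincides with the paper's proof: part (1) is established there by exactly your route (normal form $n=(u,v,\sqrt{1-u^2-v^2})$, the $2$-jets of the coefficients $(a_T,b_T,c_T)$ computed from the $3$-jets \eqref{3jetslambda12}, the fold/cusp criteria of \S\ref{sec:prel}, and a codimension count for genericity), and part (2) by exactly your rescaling: the coefficients factor as $(\lambda^2\tilde a_T,\lambda\tilde b_T,\lambda\tilde c_T)$, the extended BDE is $(\lambda\tilde a_T,\tilde b_T,\tilde c_T)$, one has $\delta=\tilde b_T^2>0$ along $\Sigma(n)$, the parabolic points on $\Sigma(n)$ are $\tilde b_T=0$, the tangency is read off the $1$-jet of $\delta=\tilde b_T^2-4\lambda\tilde a_T\tilde c_T$, and the cusps come from transversality of the double direction (the kernel of $dn$) to the discriminant. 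The one genuinely different ingredient is your derivation of the discriminant identity: the paper dismisses $\delta(Q_3)=\delta(Q_2)+\bar b^2\delta(Q_1)$ as ``a straightforward calculation'', whereas you observe that the cross term in $\delta(Q_2-\bar b Q_1)$ is the polarity pairing $2b_1b_2-a_1c_2-a_2c_1$ of $Q_1$ and $Q_2$, which vanishes because $Q_2=Jac(Q,Q_1)$ lies on $\widehat{Q_1}$ (Theorem \ref{theo:extrems}(3), Theorem \ref{theo:mean}(2)). This is cleaner and more in the spirit of \S\ref{sec:quotients}; it also exposes that the factor-$4$ discrepancy between the identity as stated in the theorem and as written in the paper's proof ($\delta(Q_3)=\delta(Q_2)+4\bar b^2\delta(Q_1)$) is purely a matter of the convention for the middle coefficient, so your version is the consistent one.

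The single incompleteness is in part (2): you work only at fold points of $n$, taking $\Sigma(n)=\{v=0\}$ and $n_v=vw$. Generically $\Sigma(n)$ also carries isolated cusp singularities of $n$, and the statement (extension of the torsal curves across all of $\Sigma(n)$, and $\Sigma(n)$ lying in the closure of the hyperbolic region) must be verified there too. The paper covers both cases uniformly by writing $n_v=\lambda w$ with $\Sigma(n)=\{\lambda=0\}$ ($\lambda_u(0,0)\lambda_{vv}(0,0)\ne0$ at a cusp) and dividing by $\lambda$ as you divide by $v$, then noting that for generic congruences the cusp point is not on the parabolic curve, so $\tilde b_T\ne0$ there and two transverse torsal directions persist. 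Your argument goes through verbatim with $v$ replaced by $\lambda$, so this is a one-line repair, but as written your proof of the extension claim only covers the fold locus. A second, smaller point: your $j^1\delta$ computation gives first-order tangency of the parabolic curve with $\Sigma(n)$; the assertion that the tangency is ordinary (as drawn in Figure \ref{fig:Torsal_Par}) needs the additional open condition on second-order terms that the paper records explicitly ($2l_2\alpha_{11}-\beta_{11}k_{10}-\beta_{21}\ne0$), which you correctly flag but should carry out.
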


\begin{figure}
	\begin{center}
		\includegraphics[scale=0.8]{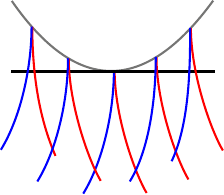}
		\caption{Generic configuration of the torsal curves on $\Sigma(n)$ and at a parabolic point on $\Sigma(n)$. The thick curve in black represents $\Sigma(n)$ and the curve in grey the parabolic curve.}
		\label{fig:Torsal_Par}
	\end{center}
\end{figure}

\begin{proof}
(1) We have already found the form of the BDE which is also $Q_2-\overline{b}Q_1=0.$ This means that $Q_3$ belongs to the 
pencil determined by $Q_1$ and  $Q_2$, that is, to the polar line of $Jac(Q_1,Q_2).$ 

A straightforward calculation shows that  $\delta(Q_3)=\delta(Q_2)+4\overline{b}^2\delta(Q_1)$. 
Using (\ref{eq:discrimP}), we get $\delta(Q_3)=4\delta(Q_1)^2(H^2-K)+4\overline{b}^2\delta(Q_1)$, with $\delta(Q_1)=B^2-AC$, so 
\begin{equation}\label{eq:zeros)mega_F}
\delta(F)=0\iff H^2-K+\frac{\overline{b}^2}{B^2-AC}=0.
\end{equation}

Equation (\ref{eq:zeros)mega_F}) generically determines a regular curve on the surface. 
For the configuration of the BDE $Q_3=0$, we use the setting of the proof of Theorem \ref{theo:princ}(1). 
Then the 2-jets of the coefficients $(a_T,b_T,c_T)$ of $Q_3$ are given by
$$
\begin{array}{rl}
j^2a_T=&\alpha_{11} + \alpha_{21}u + 2\alpha_{22}v +\alpha_{31}u^2 - (\beta_{11} - 2\alpha_{32})uv + (3\alpha_{33} + \alpha_{11})v^2,\\
j^2b_T=& \alpha_{10}-\beta_{11}  + (2\alpha_{20} - \beta_{21})u + (\alpha_{21} - 2\beta_{22})v +(3\alpha_{30}-\beta_{11} - \beta_{31} )u^2 +\\
& ( \alpha_{11}+ 2\alpha_{31}-2\beta_{32}  -\beta_{10} )uv +(\alpha_{10} + \alpha_{32}-3\beta_{33} )v^2,\\
j^2c_T=&-\beta_{10} - 2\beta_{20}u - \beta_{21}v  -(3\beta_{30} + \beta_{10})u^2 +( \alpha_{10}-2\beta_{31} )uv- \beta_{32}v^2.
\end{array}
$$

The origin is on the discriminant if and only if  $(\alpha_{10}-\beta_{11})^2+4\alpha_{11}\beta_{10}=0$. 
It is a singular point of $Q_3=0$ if and only if 
{\small
\begin{equation}\label{eq:singFlatBDE}
(\alpha_{10}-\beta_{11} )^3\beta_{20} + 2(\alpha_{10}-\beta_{11} )^2(\beta_{21} - \alpha_{20})\beta_{10} + 4\beta_{10}^2(\alpha_{10}-\beta_{11} )
(\beta_{22} - \alpha_{21}) - 8\alpha_{22}\beta_{10}^3=0.
\end{equation}
}
Following the arguments in \S\ref{sec:prel},  we can show that 
all the stable singularities in \mbox{Figure \ref{fig:Dav}} can occur. For generic congruences, those are the only singularities that can occur.

(2) We follow the setting of the proof of Theorem \ref{theo:princ} (3) at a fold or a cusp singularity of $n$. 
We set $n_v(u,v)=\lambda(u,v)w(u,v)$ with $\Sigma(n)=\{\lambda(u,v)=0\}$ and $w,n_u$ linearly independent. 
Then $(a_T,b_T,c_T)=(\lambda^2\tilde{a}_T,\lambda\tilde{b}_T,\lambda\tilde{c}_T)$ for some smooth functions $\tilde{a}_T,\tilde{b}_T,\tilde{c}_T$. 
This means that the torsal curves have a removable singularity on $\Sigma(n)$. We define their extension to $\Sigma(n)$ as the solutions of the BDE with coefficients
$(\lambda \tilde{a}_T,\tilde{b}_T,\tilde{c}_T)$. 
Then the parabolic curve extended to points on $\Sigma(n)$ 
and is given by $\tilde{b}_T^2-4\lambda \tilde{a}_T\tilde{c}_T=0$. In particular, it intersects $\Sigma(n)$ at the origin if and only if $\tilde{b}_T(0,0)=0$ and for generic congruences, the two curves have ordinary tangency at that point.

The remaining part of the proof follows by computing the initial jets of the coefficients 
 $(\lambda\tilde{a}_T,\tilde{b}_T,\tilde{c}_T)$. 
 Following the setting 
in the proof of Theorem \ref{theo:princ} (3) at a fold singularity of $n$, we get
$$
\begin{array}{rcl}
j^1(v\tilde{a}_T)&=&4\alpha_{11}v,\\
j^1\tilde{b}_T&=&-2\beta_{11} + 2(2l_2\alpha_{11} - \beta_{11}k_{10} - \beta_{21})u+ 
(4\alpha_{10} -3\beta_{11}k_{11} - 4\beta_{22})v,\\
j^1\tilde{c}_T&=&-2\beta_{10} + 2(2l_2\alpha_{10} - \beta_{10}k_{10} - 2\beta_{20})u - (3\beta_{10}l_{11} + 2\beta_{21})v.
\end{array}
$$
The origin is on the parabolic curve if and only if $\beta_{11}=0$. The  parabolic set is generically a smooth curve ($\alpha_{11}\beta_{10}\ne 0$) and has ordinary tangency with $\Sigma(n)$ ($2l_2\alpha_{11} - \beta_{11}k_{10} - \beta_{21}\ne 0$) at the origin.
The torsal curves form a family of cusps along the parabolic curve as $\alpha_{11}\beta_{10}\ne 0$. Observe that along $\Sigma(n)$, $\delta(Q_3)=\tilde{b}_T^2(u,0)>0$, so there are always two real torsal directions away from parabolic points.

The claim at a cusp singularity of $n$ follows similarly observing that for generic congruences the cusp singularity of $n$ is not on the parabolic curve.
\end{proof}

We turn now to the extremes of the pitch, these occur along the mean directions.

\begin{theo}\label{theo:mean}
{\rm (1)} At every non-umbilic point on $Z\setminus \Sigma(n)$ there are two $Q_1$-orthogonal directions along which the parameter distribution has an extremum. These are given by the BDE $Q_4=Jac(Q_2,Q_1)=0$, that is, 
\begin{equation}\label{eq:Jac(P,g)}
Q_4=
\left|
\begin{array}{ccc}
	dv^2&-dudv&du^2\\
	A & B & C\\
Ab-Ba&(Ac-Ca)/2&Bc-Cb
\end{array}
\right|=0.
\end{equation}

The discriminant of the BDE {\rm (\ref{eq:Jac(P,g)})} consists of the umbilic points, and all the configurations in {\rm Figure \ref{fig:Darboux}} can occur at such points and only these.

{\rm (2)} The triple $Q_1,Q_2,Q_4$ are vertices of a self-polar triangle. 

{\rm (3)}  For generic congruences, the solution curves of $Q_4=0$ extend to $\Sigma(n)$. Their configurations are as those in {\rm Figure \ref{fig:PrincipalSingn}} 
with the special fold points generically distinct from those of the principal curves.
\end{theo}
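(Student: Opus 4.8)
The plan is to treat the three parts in order, using throughout that the pitch equals $Q_2/Q_1$ up to the additive constant $\overline{b}$, so that Theorem \ref{theo:extrems} applies verbatim with $q_1=Q_1$ and $q_2=Q_2$. For part (1), I would first note that $\lambda=Q_3/Q_1=Q_2/Q_1-\overline{b}$ has the same extremal directions as $Q_2/Q_1$; by Theorem \ref{theo:extrems}(1) these are the roots of $Jac(Q_1,Q_2)$, which up to sign is $Q_4=Jac(Q_2,Q_1)$ (Proposition \ref{prop:BDEscong}(3)), already shown directrix-independent in Proposition \ref{prop:BDEscong}(1). Off $\Sigma(n)$ the form $Q_1$ is positive definite, so by Theorem \ref{theo:extrems}(2) the associated values $\mu_1,\mu_2$ of the pencil $(Q_2,Q_1)$ are real, and the standard symmetric-pencil argument (if $E_{Q_2}w_i=\mu_i E_{Q_1}w_i$ then $(\mu_1-\mu_2)\,w_2^{T}E_{Q_1}w_1=0$) gives $Q_1$-orthogonality of the two mean directions whenever $\mu_1\neq\mu_2$. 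To locate the discriminant I would argue exactly as in the derivation of (\ref{eq:discrimP}) that $\delta(Q_4)=4(B^2-AC)^2(H'^2-K')$ for $\mu_1',\mu_2'$ of the pair $(Q_2,Q_1)$, so $\delta(Q_4)=0$ iff $\mu_1'=\mu_2'$ iff $Q_2$ and $Q_1$ share a root, i.e. (as $Q_1$ is definite) iff $Q_2\propto Q_1$. The decisive observation is projective: since $\widehat{Q_2}=\overline{QQ_1}$ contains $Q_1$ while $Q_1\notin\Gamma$ (a point lies on its own polar only on $\Gamma$), the pole of $\overline{QQ_1}$ cannot be $Q_1$, so $Q_2\propto Q_1$ forces $Q_2=0$, i.e. $Q\propto Q_1$, an umbilic. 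Hence the discriminant of $Q_4=0$ is exactly the umbilic set; the configuration statement then follows as in Theorem \ref{theo:princ}(1), by computing $j^1Q_4$ from $j^1Q_2$ and the constant part of $Q_1$, checking its coefficients can be prescribed freely, and applying the cubic-root criterion of \S\ref{sec:prel} to realise lemon, star and monstar and no others.

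Part (2) is short and purely projective. From $Q_2=Jac(Q,Q_1)$ the point $Q_2$ is the pole of $\overline{QQ_1}$, so $Q_1\in\widehat{Q_2}$ and $Q_1,Q_2$ are conjugate. From $Q_4=Jac(Q_2,Q_1)$ the point $Q_4$ is the pole of $\overline{Q_1Q_2}$, so $\widehat{Q_4}=\overline{Q_1Q_2}$ and both $Q_1,Q_2$ are conjugate to $Q_4$. The three points are thus pairwise conjugate; off umbilics they are distinct (by part (1), $Q_2\neq0$ and $Q_2\not\propto Q_1$) and hence non-collinear, so $Q_1,Q_2,Q_4$ form a self-polar triangle.

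Part (3) I would run in parallel with Theorem \ref{theo:princ}(3). Writing $n_v=\lambda w$ with $\Sigma(n)=\{\lambda=0\}$ and $w,n_u$ independent, the relations $B=\lambda\,(n_u\cdot w)$ and $C=\lambda^2\,(w\cdot w)$ force the coefficients of $Q_4$ (Proposition \ref{prop:BDEscong}(3)) to factor in the pattern $(\lambda^2\tilde a,\lambda\tilde b,\tilde c)$ with $\tilde c(0,0)$ generically non-zero, the same pattern found there for $Q_2$; thus $Q_4=0$ has a removable singularity on $\Sigma(n)$ and extends via the coefficients $(\lambda\tilde a,\tilde b,\tilde c)$. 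I would then reproduce the fold and cusp analysis: generically $Q_4$ splits into two regular $1$-forms sharing the kernel direction of $dn$ with $2$-point contact (model $(du,d(u-v^2))$), degenerating to a saddle, node or focus of one $1$-form where the analogue of $c_P(0,0)$ vanishes, and to a $3$-point-contact configuration (model $(du,d(u+vu^2+v^3))$) over a cusp of $n$, the normal forms coming from Theorem 2.2 of \cite{davbook}. The genuinely new point is that the special fold points of $Q_4=0$ differ from those of $Q_2=0$: I would write each as the vanishing of an explicit function along $\Sigma(n)$, check the two functions are independent, and conclude by jet-transversality that their isolated zero sets on $\Sigma(n)$ are disjoint for generic congruences.

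The main obstacle is not parts (1) and (2), which reduce to projective bookkeeping together with the definiteness of $Q_1$, but this last genericity claim in part (3): making the two competing special-fold conditions precise and proving, by a transversality argument in $J^k(2,5)$, that they never coincide on $\Sigma(n)$ for an open dense set of congruences. The remaining labour is the routine but lengthy factorisation of the $Q_4$-coefficients and the $1$-jet computation of the singular $1$-form that separates saddle, node and focus.
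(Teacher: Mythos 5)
Your proposal is correct, and its skeleton matches the paper's proof (Theorem \ref{theo:extrems} for existence and $Q_1$-orthogonality, a $1$-jet computation at umbilics for the Morse-type configurations, the principal-curve analysis transplanted to $\Sigma(n)$), but two of your sub-arguments take a genuinely different route, and it is worth comparing. For the discriminant in part (1), the paper simply records the identity $\delta(Q_4)=-\delta(Q_1)\delta(Q_2)$, which off $\Sigma(n)$ (where $\delta(Q_1)<0$) vanishes exactly at umbilic points; you instead rerun the derivation of (\ref{eq:discrimP}) for the pencil $(Q_2,Q_1)$ and then use polarity (a point lies on its own polar only on $\Gamma$, and $Q_1\notin\Gamma$) to exclude $Q_2\propto Q_1$ with $Q_2\neq 0$. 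Both work, but the paper's identity is shorter and also feeds part (2): there the paper proves the self-polar triangle by the explicit computations $Jac(Q_2,Q_4)=\delta(Q_2)Q_1$ and $Jac(Q_1,Q_4)=\delta(Q_1)Q_2$, whereas your argument is synthetic, via pairwise conjugacy. Your synthetic route is fine except for one quick inference: ``distinct hence non-collinear'' is not automatic; you need $Q_4\notin\Gamma$ (if $Q_1,Q_2,Q_4$ were collinear, $Q_4$ would lie on its own polar line $\widehat{Q_4}$), which follows from $\delta(Q_4)=-\delta(Q_1)\delta(Q_2)>0$ away from umbilics and $\Sigma(n)$, or synthetically because the polar of a point of $\Gamma$ is a tangent line, which cannot pass through the interior point $Q_1$ (where $\delta(Q_1)<0$). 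For part (3) the paper writes only that the analysis ``is identical to that for principal curves and is omitted'', so your fleshed-out version supplies detail the paper suppresses; one computational correction, though: with $n_v=\lambda w$, the raw coefficients of $Q_4$ vanish on $\Sigma(n)$ to orders $(\lambda^3,\lambda^2,\lambda)$, i.e.\ they carry an extra overall factor $\lambda$, and it is the reduced BDE obtained by dividing it out that has your pattern $(\lambda^2\tilde a,\lambda\tilde b,\tilde c)$ --- exactly the reduction the paper performs for the torsal BDE, and harmless since it does not change the solution curves off $\Sigma(n)$ nor the extension. Your transversality plan for the last claim also checks out concretely: the special fold points of $Q_2=0$ occur where $n_u\cdot x_v=0$ on $\Sigma(n)$, those of the reduced $Q_4=0$ where $(n_u\cdot w)(n_u\cdot x_v)-(n_u\cdot n_u)(w\cdot x_v)=0$, and coincidence would force $n_u\cdot x_v=w\cdot x_v=0$ on $\Sigma(n)$, three independent conditions in a two-dimensional domain, hence excluded for generic congruences --- the same codimension count the paper already invokes when showing $\Lambda>0$ in the proof of Theorem \ref{theo:princ}(3).
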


\begin{proof}
(1) We have already established the form of the BDE, and the orthogonality condition follows from Theorem \ref{theo:extrems}.
We have $\delta(Q_4)=-\delta(Q_1)\delta(Q_2)$ which vanishes only at umbilic points or on $\Sigma(n)$.

At an umbilic point which is generically not on $\Sigma(n)$, and with the same setting as for the previous cases, we have $j^1Q_4=(\bar{a}_1u+\bar{a}_2v,\bar{b}_1u+\bar{b}_2v,-\bar{a}_1u-\bar{a}_2v)$ with
$$
\begin{array}{lcl}
\bar{a}_1=-2\alpha_{20} + \beta_{21}&\quad&\bar{a}_2=-\alpha_{21} +2\beta_{22},\\
\bar{b}_1=2(\alpha_{21} + 2\beta_{20})&&b_2=2(2\alpha_{22} + \beta_{21}).
\end{array}
$$

Clearly, as in the proof of Theorem \ref{theo:princ},  we can obtain the three generic configurations in Figure \ref{fig:Darboux} and show that only these occur.
Observe that $\bar{a}_1u+\bar{a}_2v=-b_1u-b_2v$ and $\bar{b}_1u+\bar{b}_2v=4(a_1u+a_2v)$
with  $j^1Q_2=(a_1u+a_2v,b_1u+b_2v,-a_1u-a_2v)$ as in the proof of Theorem \ref{theo:princ}.

(2) We have $Jac(Q_2,Q_4)=\delta(Q_2)Q_1$, $Jac(Q_1,Q_4)=\delta(Q_1)Q_2$ 
and $Q_4=Jac(Q_1,Q_2)$, so $Q_1,Q_2,Q_4$ are vertices of a self-polar triangle.

(3) The analysis at points on $\Sigma(n)$ is identical to that for principal curves and is omitted. 
\end{proof}

\subsection{Characteristic directions and characteristic points}\label{ssec:charc}

The BDEs of the torsal and mean directions determine another BDE on the surface, namely, $Q_5=Jac(Q_3,Q_4)=0$. We call its solutions at each point $z$ the {\it characteristic directions} and their integral curves the {\it characteristic curves}. 

\begin{theo}\label{theo:charDir}
{\rm (1)} At each point $z$ on the surface $Z\setminus \Sigma(n)$ there are $2,1$ or $0$ characteristic directions in $T_zZ$. These are given 
by the BDE
$$
Q_5=-\delta(Q_2)Q_1-\overline{b}\delta(Q_1) Q_2=0,
$$
so $Q_5$ is on the polar line $\widehat{Q_4}$ of $Q_4$. The triple $Q_3,Q_4,Q_5$ are vertices of a self-polar triangle.

{\rm (2)}  The discriminant of the BDE $Q_5=0$ is given by  $\delta(Q_5)=\delta(Q_1)\delta(Q_2)\delta(Q_3)$. 
Consequently,  when the torsal directions are real the characteristic ones are imaginary and vice-versa, that is, the characteristic curves lie on the closure of the elliptic region of $Z$. 

{\rm (3)} The discriminant of $Q_5$ is the union of the parabolic curve together with the umbilic points. The folded singularities of the 
characteristic directions occur at the same points as those of the torsal directions. 
The two configurations have opposite indices (when one has a folded saddle, the other is a folded node or focus). At umbilic points, the characteristic curves have the same configurations as those of the principal curves.

{\rm (4)} The characteristic curves BDE extend to $\Sigma(n)$. 
For generic congruences and away from parabolic points, there are no characteristic directions on $\Sigma(n)$. At a parabolic point on 
$\Sigma(n)$, we get a family of cusps along the parabolic curve.
\end{theo}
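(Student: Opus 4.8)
The plan is to treat the four parts in turn, exploiting the self-polar triangle $Q_1,Q_2,Q_4$ of Theorem \ref{theo:mean} and the fact that $Q_2=Jac(Q,Q_1)$ forces $Q_1$ and $Q_2$ to be \emph{apolar}: since $\widehat{Q_2}$ is the line through $Q$ and $Q_1$, the point $Q_1$ lies on $\widehat{Q_2}$, so the mixed invariant appearing in the polarity criterion of \S\ref{sec:quotients} vanishes. For part (1), since $Q_3=Q_2-\overline{b}Q_1$ and the Jacobian is bilinear and alternating,
$$Q_5=Jac(Q_3,Q_4)=Jac(Q_2,Q_4)-\overline{b}\,Jac(Q_1,Q_4);$$
feeding in the self-polar relations of Theorem \ref{theo:mean}(2) collapses the right-hand side to the combination $Q_5=-\delta(Q_2)Q_1-\overline{b}\,\delta(Q_1)Q_2$ of $Q_1$ and $Q_2$, which therefore lies on the polar line $\widehat{Q_4}$. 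Substituting this expression back and using $Jac(Q_1,Q_2)=Q_4$ one finds $Jac(Q_4,Q_5)=\delta(Q_1)\delta(Q_2)Q_3$ and $Jac(Q_3,Q_5)=\delta(Q_3)Q_4$, so away from $\Sigma(n)$, umbilics and the parabolic curve each of $Q_3,Q_4,Q_5$ is a nonzero multiple of the Jacobian of the other two, giving the self-polar triangle.

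For part (2), I would write $Q_5=sQ_1+tQ_2$ with $s=-\delta(Q_2)$, $t=-\overline{b}\,\delta(Q_1)$ and use the pencil identity $\delta(sQ_1+tQ_2)=s^2\delta(Q_1)+t^2\delta(Q_2)+st\,\Theta$, where $\Theta$ is the mixed invariant. Apolarity gives $\Theta=0$, so $\delta(Q_5)=\delta(Q_1)\delta(Q_2)\big(\delta(Q_2)+\overline{b}^2\delta(Q_1)\big)=\delta(Q_1)\delta(Q_2)\delta(Q_3)$ by the expression for $\delta(Q_3)$ in Theorem \ref{theo:lambda=0}(1). The real/imaginary dichotomy then follows from signs: off $\Sigma(n)$ the form $Q_1$ is positive definite so $\delta(Q_1)<0$, while $\delta(Q_2)=4(B^2-AC)^2(H^2-K)\ge 0$ by (\ref{eq:discrimP}), strictly positive off umbilics; hence $\delta(Q_1)\delta(Q_2)<0$ and the sign of $\delta(Q_5)$ is opposite to that of $\delta(Q_3)$, placing the characteristic curves in the elliptic region $\delta(Q_3)<0$.

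For part (3), the product formula shows that off $\Sigma(n)$ the discriminant of $Q_5$ vanishes exactly on $\{\delta(Q_2)=0\}\cup\{\delta(Q_3)=0\}$, the umbilics together with the parabolic curve. Rewriting $Q_5=-\delta(Q_3)Q_1-\overline{b}\,\delta(Q_1)Q_3$ (via $Q_2=Q_3+\overline{b}Q_1$) shows $Q_5\propto Q_3$ on the parabolic curve, so the two BDEs carry the same double direction there and their folded singularities coincide in position. At an umbilic $Q_2$ vanishes to first order while $\delta(Q_2)$ vanishes to second order, so $j^1Q_5=-\overline{b}\,\delta(Q_1)\,j^1Q_2$ is (for generic congruences, where $\overline{b}\ne 0$) a nonzero constant multiple of $j^1Q_2$; the cubic $\phi$ and the form $\alpha$ of \S\ref{sec:prel} are thus the same up to scale, so $Q_5$ repeats the lemon/star/monstar type of the principal net. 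The remaining claim — that a shared folded singularity is a saddle for one BDE precisely when it is a node or focus for the other — is the main obstacle. I would establish it by comparing the type-invariant $\lambda=(4a_0c_3-b_1^2-b_1c_2)/(4c_2^2)$ of \S\ref{sec:prel} for $Q_3$ and $Q_5$ at the shared point, using the local coordinates of Theorem \ref{theo:lambda=0} and the $2$-jet of $Q_5$ read off from $Q_5=-\delta(Q_3)Q_1-\overline{b}\,\delta(Q_1)Q_3$; the factor $\delta(Q_3)$, which changes sign across the parabolic curve, is what flips the sign of the invariant and forces the opposite index, in line with the opposite-side phenomenon of part (2) and the duality of \cite{duality}.

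Finally, for part (4) I would follow the factoring method of Theorems \ref{theo:princ}(3), \ref{theo:lambda=0}(2) and \ref{theo:mean}(3): set $n_v=\lambda w$ with $\Sigma(n)=\{\lambda=0\}$ and $w,n_u$ independent, extract the common power of $\lambda$ from the coefficients of $Q_5$, and define the extension as the reduced BDE. Its reduced discriminant is then computed on $\Sigma(n)$; consistently with part (2) and with $\delta(Q_3)=\tilde{b}_T^2>0$ on $\Sigma(n)$ from Theorem \ref{theo:lambda=0}(2), it is negative away from parabolic points, so there are no real characteristic directions there, whereas at a parabolic point on $\Sigma(n)$ it acquires a tangential zero and the initial jets of the reduced coefficients give a family of cusps along the parabolic curve, as in Figure \ref{fig:Torsal_Par}.
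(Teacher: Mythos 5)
Your proposal is correct, and while it follows the same overall architecture as the paper's proof, several steps are reached by a cleaner, more invariant-theoretic route. For (1) the paper obtains the expression for $Q_5$ by direct calculation; your derivation via bilinearity of the Jacobian plus the relations of Theorem \ref{theo:mean}(2) is the right idea, with one caveat: those relations as printed are mutually inconsistent in sign (with $Q_4=Jac(Q_1,Q_2)$ one actually has $Jac(Q_2,Q_4)=-\delta(Q_2)Q_1$ and $Jac(Q_1,Q_4)=\delta(Q_1)Q_2$), and taken verbatim they would give $+\delta(Q_2)Q_1-\overline{b}\delta(Q_1)Q_2$, whose \emph{relative} sign is wrong and would in particular break your part (3) rewrite $Q_5=-\delta(Q_3)Q_1-\overline{b}\delta(Q_1)Q_3$; with consistent conventions the formula you state is exactly what comes out, so this is a typo in the paper to flag rather than an error of yours. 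For (2) the paper states $\delta(Q_5)=\delta(Q_1)\delta(Q_2)\delta(Q_3)$ without derivation and only argues the sign consequence; your proof via the pencil identity together with the apolarity $\Theta(Q_1,Q_2)=0$ (which holds because the Jacobian of two forms is apolar to each of them, equivalently $Q_1\in\widehat{Q_2}$) is complete and makes the product formula transparent. For (3) the paper computes $2$-jets and checks that the singularity condition (\ref{eq:singFlatBDE}) recurs; your observation that $Q_5\propto Q_3$ along the parabolic curve, read off from the $Q_3$-pencil rewrite, gives the coincidence of the folded singularities without computation, and your $j^1Q_5=-\overline{b}\delta(Q_1)\,j^1Q_2$ argument at umbilics is the conceptual version of the paper's scalar-multiple remark (you rightly note the generic hypothesis $\overline{b}\neq 0$ there).

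The one step you leave as an outline, the opposite-index claim, is also the one the paper merely asserts (``we compute the scalar $\lambda$ \ldots and find that it is the opposite''), and your plan does succeed, in fact exactly: normalising $Q_1$ to $du^2+dv^2$ at the folded point and the double direction to $dv=0$, apolarity forces $\mathbf{a}_3(0)=-2\overline{b}(0)$, and feeding the $2$-jet of $Q_5=-\delta(Q_3)Q_1-\overline{b}\delta(Q_1)Q_3$ into the recognition invariant of \S\ref{sec:prel} yields $\lambda_{Q_5}=-\lambda_{Q_3}$ identically. Note that the apolarity constraint is essential here: for a pair $(Q_1,Q_3)$ not satisfying it even the sign of $\delta(Q_5)$ comes out wrong, so your instinct to anchor everything in the pencil structure is the right one. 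For (4), where the paper in fact offers no proof at all, your factoring plan is sound but needs one correction: with $n_v=\lambda w$ the coefficients of $Q_5$ have orders $(\lambda^4,\lambda^3,\lambda^2)$, so after removing the common factor $\lambda^2$ the reduced discriminant is $\lambda^2\Lambda_1\Lambda_2\Lambda_3$ with $\Lambda_1\Lambda_2\Lambda_3<0$ near $\Sigma(n)$ away from parabolic points; this vanishes \emph{on} $\Sigma(n)$ rather than being negative there, leaving a degenerate double direction along $\ker dn$, so the assertion ``no characteristic directions on $\Sigma(n)$'' must be read as $\Sigma(n)$ lying in the closure of the elliptic region, which is what your sign bookkeeping otherwise predicts.
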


\begin{proof}
(1) and (2). The expression for $Q_5$ follows by calculating $Jac(Q_2,Q_3)$.
Clearly it is on the polar line $\widehat{Q_4}$ of $Q_4=Jac(Q_1,Q_2)$. 
We have $Jac(Q_4,Q_5)=\delta(Q_1)\delta(Q_2)Q_3$ so the polar line $\widehat{Q_3}$ of $Q_3$ contains $Q_4$ and $Q_5$. The polar line $\widehat{Q_5}$ of $Q_5$ contains $Q_3$ and $Q_4$ by definition of $Q_5.$ This proves that 
$Q_3,Q_4,Q_5$ are vertices of a self-polar triangle.

  Observe that away from $\Sigma(n)$ and umbilic points $\delta(Q_1)<0$ and $\delta(Q_2)>0$, so $\delta(Q_5)$ and $\delta(Q_3)$ have opposite signs. It follows that the torsal curves and the characteristic curves live on opposite sides of the parabolic curve.

(3) We follow the setting of the proof of the previous cases and compute the 2-jet of the coefficients of BDE $Q_5$. At a parabolic point, we find that the BDE is singular if and only if the condition  (\ref{eq:singFlatBDE}) is satisfied. 
We compute the scalar $\lambda$  that determines the type of the folded singularity (see \S \ref{sec:prel}) and find that it is the opposite of that associated with of the folded singularity of the torsal direction.

At umbilic points, the 1-jet of $Q_5$ is a scalar multiple of that of the principal curves, so at generic umbilic points, the two BDEs have the same configurations.
\end{proof}

When the characteristic directions are real at a point $z$ on $Z$, they determine points on the line $L\subset \BR^3$ associated to $z$ 
which we call {\it characteristic points}. We call the surface in $\mathbb R^3$ that they trace as $z$ varies in $Z$ the {\it characteristic surface}. 

\begin{theo}
There are $2,1$ or $0$ characteristic points on the line $L\subset \BR^3$. 
When there are two of them, there are no focal points and vice-versa.
The midpoint of the characterise points is the midpoint of the boundary points. The characteristic points lie between the boundary points; see 
 {\rm Figure \ref{fig:position}}.  
\end{theo}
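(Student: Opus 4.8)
The plan is to route every point on $L$ through the single function $r(\gamma'(0))=-Q/Q_1$ that assigns to a direction its central point, and to read off all four assertions from the behaviour of this function on the pencil $\langle Q_1,Q_2\rangle$. Fix a non-umbilic point of $Z\setminus\Sigma(n)$, where $Q_1$ is definite, and choose a $Q_1$-orthonormal frame that simultaneously diagonalises $Q$, so $Q_1=\alpha^2+\beta^2$ and $Q=\mu_1\alpha^2+\mu_2\beta^2$ with $\mu_1\ge\mu_2$ the values of Theorem \ref{theo:extrems}(2) (the freedom $Q\mapsto Q+fQ_1$ only shifts every $\mu_i$, hence the origin of $L$, by the same amount, so all midpoint and betweenness claims are unaffected). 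Writing a direction by its slope $s=\beta/\alpha$, the central point is $r(s)=-(\mu_1+\mu_2 s^2)/(1+s^2)$, a weighted mean of $-\mu_1$ and $-\mu_2$; thus $r(s)\in[-\mu_1,-\mu_2]$ for every real $s$, the endpoints $r(0)=-\mu_1$ and $r(\infty)=-\mu_2$ being exactly the boundary points (the roots $0,\infty$ of $Q_2\propto\alpha\beta$), whose midpoint is $-H$.

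First I would settle the count and the focal/characteristic alternative. The characteristic points are the values of $r$ along the characteristic directions $Q_5=0$, so there are $2,1$ or $0$ of them according to the sign of $\delta(Q_5)$. By Theorem \ref{theo:charDir}(2), $\delta(Q_5)=\delta(Q_1)\delta(Q_2)\delta(Q_3)$, and off $\Sigma(n)$ and the umbilics $\delta(Q_1)<0$, $\delta(Q_2)>0$; hence $\delta(Q_5)$ and $\delta(Q_3)$ carry opposite signs. As the focal (torsal) points are real exactly when $\delta(Q_3)>0$, this yields the exclusivity: two characteristic points precisely when there are no foci, and conversely.

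The crux is the midpoint assertion. Here I would use that $Q_5=-\delta(Q_2)Q_1-\overline{b}\,\delta(Q_1)Q_2$ lies on the pencil $\langle Q_1,Q_2\rangle$, i.e. on the polar line $\widehat{Q_4}$ of Theorem \ref{theo:charDir}(1). In the chosen frame every form $cQ_1+dQ_2=c(\alpha^2+\beta^2)+d(\mu_1-\mu_2)\alpha\beta$ has equal $\alpha^2$- and $\beta^2$-coefficients, so its two roots satisfy $s_1 s_2=1$; and whenever $s_1 s_2=1$ a one-line computation gives $r(s_1)+r(s_2)=-(\mu_1+\mu_2)$, so the two associated points are symmetric about $-H$. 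Applying this to $Q_5$ shows the midpoint of the characteristic points is $-H$, the midpoint of the boundary points (the same computation applied to $Q_3$ recovers that the foci also average to $-H$). Finally, in the elliptic region where the characteristic directions are real the two slopes are finite and nonzero since their product is $1$, so by the first paragraph the characteristic points lie strictly inside $[-\mu_1,-\mu_2]$, that is, between the boundary points, which is the configuration of Figure \ref{fig:position}.

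I expect the only genuine work to be the midpoint identity, and even that collapses once the $Q_1$-orthonormal principal frame is in place: the whole statement then rests on the single symmetry $r(s)+r(1/s)=-(\mu_1+\mu_2)$ together with the pencil membership of $Q_5$ already recorded in Theorem \ref{theo:charDir}. The counting and the ``lying between'' are immediate, respectively, from the discriminant product and from the extremal characterisation of the boundary points as the two endpoints of the range of $r$.
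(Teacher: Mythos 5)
Your proposal is correct, and it reaches the conclusion by a genuinely different route from the paper. The paper's proof is a direct elimination: substituting a solution $(u',v')$ of $Q_5=0$ into $r=-Q/Q_1$ and eliminating the direction yields the explicit quadratic $r^2-2Hr+K+\frac{AC-B^2}{\bar b^2}(H^2-K)^2=0$; comparing it with the boundary-point quadratic $r^2-2Hr+K=0$ gives the midpoint claim (equal sum of roots) and betweenness (the squared separation drops by $(AC-B^2)(H^2-K)^2/\bar b^2>0$). You instead work structurally: simultaneous diagonalisation in a $Q_1$-orthonormal principal frame, so that $Q_2\propto\alpha\beta$ and every member of the pencil $\langle Q_1,Q_2\rangle$ has slope-product $1$, combined with the symmetry $r(s)+r(1/s)=-(\mu_1+\mu_2)$ and the fact that the range of $r$ is exactly the interval bounded by the boundary points, attained only at $s=0,\infty$. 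Each step checks out (in particular $Q_5=-\delta(Q_2)Q_1-\bar b\,\delta(Q_1)Q_2$ is a nonzero pencil member at non-umbilic points off $\Sigma(n)$ since $\delta(Q_2)>0$ there, so the slope equation is genuinely quadratic with root product $1$). Your approach buys two things the paper's does not: it proves the symmetry statement uniformly for the whole pencil, so the same computation simultaneously recovers that the foci (from $Q_3=Q_2-\bar b Q_1$) are also symmetric about the middle point, and it avoids any division by $\bar b$, hence degenerates gracefully where $\bar b=0$ (there $Q_5\propto Q_1$ and there are simply no real characteristic directions), whereas the paper's displayed quadratic presupposes $\bar b\ne 0$. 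What the paper's computation buys in exchange is quantitative: the explicit distance $H^2-K-(AC-B^2)(H^2-K)^2/\bar b^2$ between the characteristic points, which locates them on $L$ precisely and not merely between the boundary points. One cosmetic remark: your $\mu_i$ are extremes of $Q/Q_1$ while the paper's conventions (note $a=-n_u\cdot x_u$, etc.) effectively make them extremes of $r=-Q/Q_1$, which is why your boundary points sit at $-\mu_i$ and the paper's at the roots of $r^2-2Hr+K=0$; this is a consistent sign convention on each side and affects nothing.
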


\begin{proof}
We consider the case when $n$ is not singular; the singular case is similar. Then
each characteristic direction (when it exists) at a point $z\in Z$ determines a  characteristic point on the line $L\subset \mathbb R^3$ with distance $r=-x'.n'/n'.n'$ from $x$, with $(u',v')$ a solution of $Q_5=0$. 
Eliminating  $u', v'$ gives
$$
r^2-2Hr+K+\frac{AC-B^2}{\bar{b}^2}(H^2-K)^2=0.
$$

It follows that the midpoint $(x+Hn)(0,0)$  of the characteristic points  is the midpoint of the boundary points (these are 
given by $
r^2-2Hr+K=0,
$ \cite{Weatherburn}).
The square of the distance between  the characteristic points is  $H^2-K-(AC-B^2)(H^2-K)^2/{\bar{b}^2}$
and that between the boundary points is $H^2-K$. Therefore the characteristic points are between the boundary points.
\end{proof}

\begin{figure}[htp]
	\begin{center}
		\includegraphics[scale=0.4]{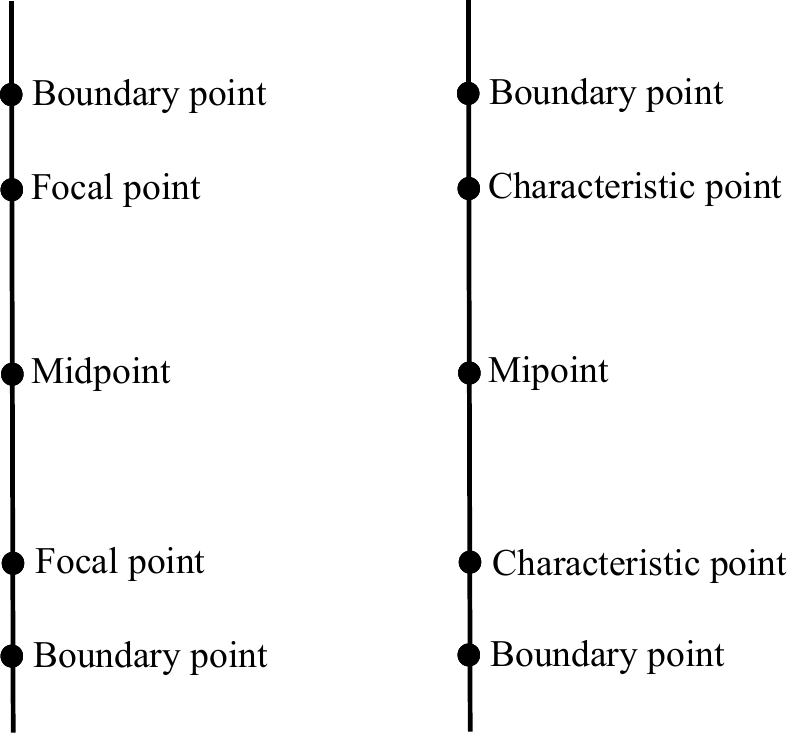}
		\caption{Relative position of the boundary, focal/characteristic and midpoints.}
		\label{fig:position}
	\end{center}
\end{figure}

\section{Normal congruences}
A congruence $Z\subset \BL$ is said to be a normal congruence if for some smooth surface $X\subset \BR^3$ the set $Z$ consists of the normals to $X$ denoted by $N(X)$.

Normal congruences are important in geometrical optics, where $X$ can be thought of as a wavefront and the associated lines as rays. In this case the quadratic forms in the previous section are determined by the first and second fundamental forms of $X$. 

We parametrise the directrix $X$ away from its umbilic points so that the coordinate curves represent the lines of principal curvature. 
Then the coefficients of the first fundamental form are 
$x_u\cdot x_u=E$, $x_u\cdot x_v=0$, $x_v\cdot x_v=G$.

The quadratic form $Q_1$ is the third fundamental form of $X$ and has coefficients
$A=\kappa_1^2E$, $B=0$ and $C=\kappa_2^2G$.  

For normal congruences, $\bar{b}=0$ (in fact $\bar{b}=0$ if and only if $Z$ is a normal congruence, \cite{Weatherburn}, p. 196) 
and $a=\kappa_1E,b=0,c=\kappa_2G$ 
are the coefficients of the second fundamental form of $X$.

The quadratic form $Q_2=\kappa_1\kappa_2EG(\kappa_1-\kappa_2)dudv$, so 
the solution curves of $Q_2=0$, i.e., the principal curves of the congruence $Z$ coincide with the lines of principal curvature of $X$ in the parameter space.
In fact, the natural map $X\to \BL, x\mapsto (x, N(x))$ with image $N(X)$, takes the classical principle directions of $X$ to the principal directions of the congruence $N(X)$. (Here $\Sigma(n)$ is the parabolic set of $X$  given by $\kappa_1\kappa_2=0$, so the principal directions on $Z$ have removable singularities on $\Sigma(n)$.)

We have $Q_3=Q_2$, so the torsal curves coincide with the principal curves, so 
every non-umbilic point on $N(X)$ is a hyperbolic point.

The BDE of the mean directions is given by  
$$
Q_4=Jac(Q_1,Q_2)=\kappa_2^2Gdv^2-\kappa_1^2Edu^2=0.
$$  

The directions determined by $Q_4$ are called {\it minimal orthogonal spherical image directions}  in \cite{joey}. They are the unique pair of tangent directions to $X$ that have orthogonal images under $dn$ and that are inclined at a minimal angle at each point of $X$.

The characteristic BDE is $
Q_5=Jac(Q_4,Q_2)=\kappa_2^2Gdv^2+\kappa_1^2Edu^2=0,
$  
so does not have real solutions (there are no elliptic points on $N(X)$).

\begin{acknow}
	The work in this paper was partially supported by the FAPESP Thematic project grant 2019/07316-0.
\end{acknow}


\noindent 
JWB: Department of Mathematical Sciences, University of Liverpool, Liverpool, L69 3BXl\\
E-mail: billbrucesingular@gmail.com\\

\noindent
FT: Instituto de Ci\^encias Matem\'aticas e de Computac\~ao - USP, Avenida Trabalhador s\~ao-carlense, 400 - Centro,
CEP: 13566-590 - S\~ao Carlos - SP, Brazil.\\
E-mail: faridtari@icmc.usp.br

\end{document}